\titlespacing{\paragraph}{%
  0pt}{
  0.0\baselineskip}{
  1em}
\newtheorem{lemma}{Lemma}
\newtheorem{proposition}{Proposition} 
\newcommand{\R}{\mathbb{R}}
\def\BState{\State\hskip-\ALG@thistlm}
\DeclareMathOperator*{\argmin}{Argmin}
\begin{document}

\title{\vspace{-2em}A Simulated Annealing Algorithm for the Directed Steiner Tree Problem}
\author[1]{Matias Siebert}
\author[1]{Shabbir Ahmed}
\author[1]{George Nemhauser}
\affil[1]{H. Milton Stewart School of Industrial and Systems Engineering, Georgia Institute of Technology, Atlanta, GA}
\date{}
\maketitle

\begin{abstract}
\noindent In \cite{siebert2019linear} the authors present a set of integer programs (IPs) for the Steiner tree problem, which can be used for both, the directed and the undirected setting of the problem. Each IP finds an optimal Steiner tree with a specific structure. A solution with the lowest cost, corresponds to an optimal solution to the entire problem. The authors show that the linear programming relaxation of each IP is integral and, also, that each IP is polynomial in the size of the instance, consequently, they can be solved in polynomial time. The main issue is that the number of IPs to solve grows exponentially with the number of terminal nodes, which makes this approach impractical for large instances.

In this paper, we propose a local search procedure to solve the directed Steiner tree problem using the approach presented in \cite{siebert2019linear}. In order to do this, we present a dynamic programming algorithm to solve each IP efficiently. Then we provide a characterization of the neighborhood of each tree structure. Finally, we use the proposed algorithm and the neighborhood characterization to solve the problem using a simulated annealing framework. Computational experiments show that the quality of the solutions delivered by our approach is better than the ones presented in the literature for the directed Steiner tree problem.
\end{abstract}

\section{Introduction}

The Steiner tree problem is a classic network design problem. There are two versions, directed and undirected. In the directed version (DST), we are given a directed graph $D=(V,A)$, with non-negative arc costs $c_a$ for all $a\in A$, a root node $r\in V$, and a set of terminal nodes $R\subset V$. In the undirected case (UST), we are given an undirected graph $G=(V,E)$, with non-negative edge costs $c_e$ for all $e\in E$, and a set of terminal nodes $R\subseteq V$. In the latter case, one can always take the bidirected version of the graph $G$, pick an arbitrary terminal node $r\in R$ as a root node, and solve the problem as a directed Steiner tree. Note that we may assume without loss of generality that $c > 0$, since we can contract all the edges with cost 0.

The undirected version of the problem is known to be an NP-Hard \cite{karp1972reducibility}, and it is even NP-Hard to find an approximate solution whose cost is within a factor of $\frac{96}{95}$ of the optimum \cite{bern1989steiner,chlebik2008steiner}. Nevertheless, there are constant factor approximation algorithms for this problem \cite{byrka2013steiner,robins2005tighter,zelikovsky199311}, with $\ln(4)+\epsilon$ being the best known approximation factor for the undirected problem.

On the other hand, for DST there are not any known constant factor approximation algorithms. Actually, in \cite{halperin2003polylogarithmic}, the authors show that the directed Steiner tree problem admits no $\mathcal{O}\left(\log^{2-\epsilon}(|R|)\right)$, unless $NP\subseteq ZTIME(n^{polylog(n)})$. The best known approximation ratio for the DST problem is $\mathcal{O}\left(|R|^{\frac{1}{t}}\right)$ for $t>1$, which corresponds to the algorithm presented in \cite{charikar1999approximation}. This algorithm constructs low density directed Steiner trees in polynomial time, where the density of a tree is defined as the ratio of the cost of the tree over the number of terminals of the tree.

For UST, there are several efficient algorithms to reduce the instances size, keeping at least one optimal solution. Moreover, there exists efficient algorithms to solve the problem \cite{duin1997efficient,duin2000preprocessing,polzin2001improved,polzin2002extending}. These algorithms solve to optimality a large part of the problems in the SteinLib library \cite{koch2001steinlib}. Since all of those algorithms assume the graph is undirected, in most cases they cannot be extended to the directed problem \cite{rehfeldt2015generic}. 

In contrast, there are not many papers in the literature related to computational experiments for the directed Steiner tree problem. In \cite{watel2016practical}, the authors present two approximation algorithms, with $\mathcal{O}(|R|)$ and $\mathcal{O}(\sqrt{|R|})$ approximation ratios respectively. The authors present computational experiments, creating directed instances from over 900 undirected instances of the SteinLib library. All the directed graphs created by the authors keep at least one optimal solution of the original instance, consequently, they know in advance the value of an optimal solution. They compare their proposed algorithms against four benchmark algorithms used in practice. The authors conclude that their proposed algorithms outperform the rest of the studied algorithms in terms of solution quality, while having similar execution times.

In this paper we focus on the new approach proposed in \cite{siebert2019linear}, where the authors present a set of integer programs (IP) for the Steiner tree problem. The solution of an IP with lowest cost, corresponds to an optimal solution of the problem. Each IP corresponds to a specific tree structure, and its size is polynomial in the size of the instance. Moreover, the linear relaxation of each IP is integral, therefore each sub-problem can be solved in polynomial time. The issue with the proposed approach is that the number of IPs to solve grows exponentially with the number of terminal nodes. We present a dynamic programming algorithm and a simulated annealing based framework to address this issue, and we present computational experiments comparing the proposed approach to all the algorithms studied in \cite{watel2016practical}.

The remainder of the paper is organized as follows. Section \ref{Sec:Description_LP_Based_Approach} provides a detailed review of the approach presented in \cite{siebert2019linear}, and introduces definitions used in the paper. Section \ref{Sec:Algorithm_Section} presents an efficient algorithm to solve each subproblem of the approach presented in \cite{siebert2019linear}. Section \ref{Sec:Neighborhood_Characterization} provides a complete characterization of the neighborhood of each subproblem, which is needed since simulated annealing performs a local search at each iteration. Section \ref{Sec:Simulated_Annealing} presents the proposed simulated annealing framework to solve the problem. This section also provides a routine to check and, potentially, improve the quality of the solutions obtained at each iteration of the simulated annealing algorithm. Moreover, this section also presents a special analysis for the case of rectilinear graphs. In Section \ref{Sec:Results}, we present computational experiments obtained by solving the directed Steiner tree problem using our proposed formulation. Computational experiments show that the quality of the solutions delivered by our approach is better than the ones presented in the literature for the directed Steiner tree problem. Section \ref{Sec:Conclusions} gives conclusions.

\section{Description of LP-based approach}\label{Sec:Description_LP_Based_Approach}

The directed Steiner tree problem is to construct a minimum cost directed tree, rooted at $r$, which spans all terminal nodes. In \cite{siebert2019linear}, the authors give a new approach to solve this problem. It involves solving a set of independent IPs, with the property that the solution of an IP with lowest cost corresponds to an optimal solution of the problem. Each IP solves the problem for a fixed Steiner tree structure, where each structure is defined by the way the paths from $r$ to each terminal node share arcs. In this approach, one commodity per terminal node is created. The source node of all commodities is the root node $r$, and the sink is the corresponding terminal node. Figure \ref{Fig:Steiner_the_Structure_Example} illustrates the idea.

\begin{figure}[H]
\begin{subfigure}{.5\textwidth}
  \centering
 \includegraphics[width=0.8\textwidth]{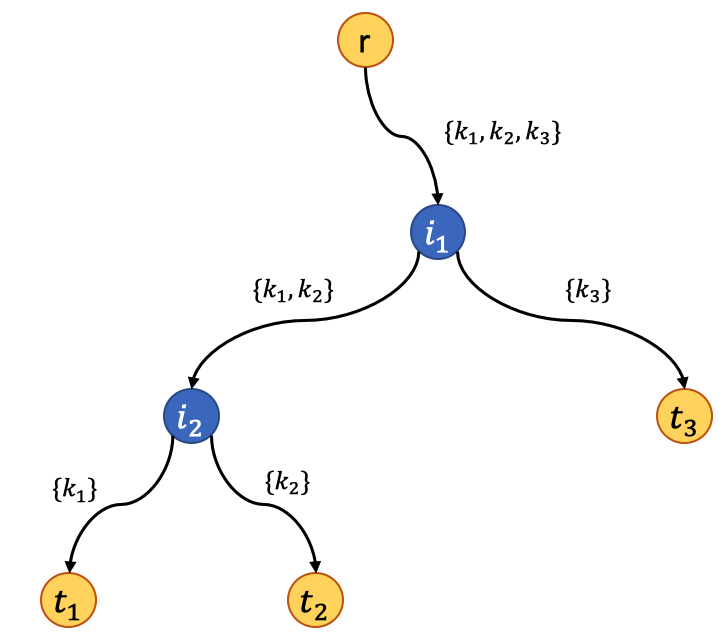}
  \caption{3 terminals directed Steiner tree rooted at $r$.}
  \label{Subfig:Example_Directed_Steiner_tree}
\end{subfigure}
\begin{subfigure}{.5\textwidth}
  \centering
 \includegraphics[width=0.738\textwidth]{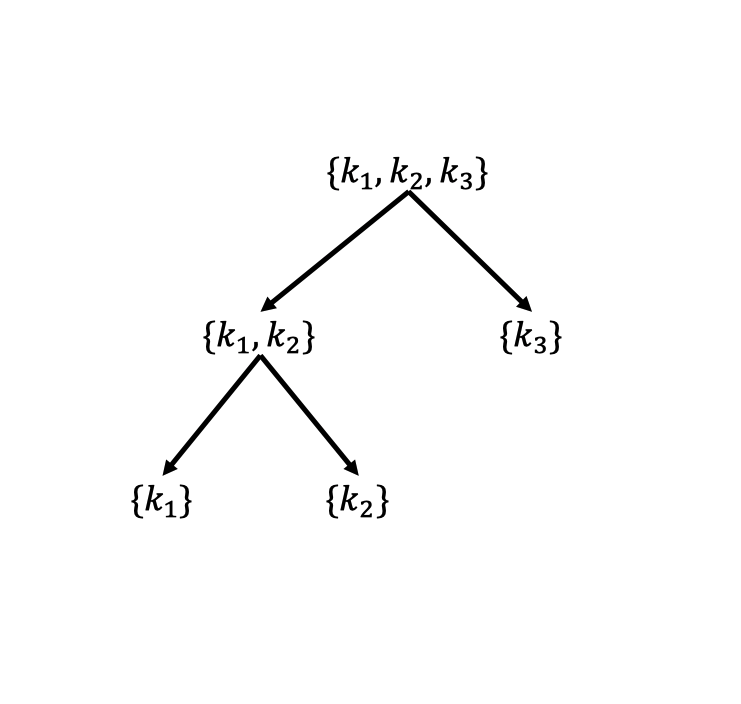}
  \caption{Structure of directed Steiner tree.}
  \label{Subfig:Structure_of_Seiner_tree}
\end{subfigure}
\caption{A 3 terminals directed Steiner tree rooted at $r$, and tree representation of its laminar structure. }
\label{Fig:Steiner_the_Structure_Example}
\end{figure}

Figure \ref{Subfig:Example_Directed_Steiner_tree} shows a directed Steiner tree, which is composed of 5 paths. Above each path, the set of commodities that share arcs in such paths is highlighted. Figure \ref{Subfig:Structure_of_Seiner_tree} displays the tree representation of the laminar structure of the sets that are present in the solution of the tree shown in Figure \ref{Subfig:Example_Directed_Steiner_tree}.

The main result in \cite{siebert2019linear} is that the formulation for each structure is perfect, meaning that the linear programming (LP) relaxation of each IP corresponds to the convex hull of the set of all feasible solutions to the IP. Consequently, each IP can be solved by solving its LP relaxation. Furthermore, each IP has polynomial size with respect to the size of the input data, consequently, they can be solved efficiently, even for large instances. The main drawback of the proposed approach, is that the number of IP problems to solve grows exponentially with the number of terminal nodes, which makes this approach tractable only when the number of terminal nodes is fixed, and impractical for real-world size instances. For a detailed explanation of this approach we refer the readers to \cite{siebert2019linear}.

\subsection{Notation}

In this section, we introduce notation used throughout this paper. We assume we have a directed Steiner tree instance. Let $D=(V,A)$ be the directed input graph, where $V$ is the set of nodes of the graph, and $A$ is the set of arcs of the graph. We are also given a root node $r\in V$, and a set of terminal nodes $R\subseteq V\backslash\{r\}$. The nodes in  $V\backslash(R\cup\{r\})$ are called Steiner nodes. For each terminal node $t_k\in R$, we define a commodity $k$, and we define $K$ to be the set of all commodities. All commodities share the same source node, which is $r$, and for each commodity $k\in k$, its sink node is the corresponding terminal $t_k\in R$. Let $b$ be the number of commodities, and let $S$ be the set of all non-empty subsets of elements in $K$.

As discussed in section \ref{Sec:Description_LP_Based_Approach}, the tree structure of every directed Steiner tree is defined by the sets of commodities present in such tree. This collection of sets of $S$ forms a laminar family. Recall that a family $\mathcal{C}$ of sets is {\it laminar} if for every $A,B\in \mathcal{C}$ we either have that $A\subseteq B$, or $B\subseteq A$, or $A\cap B=\emptyset$. We define $\mathcal{L}_b$ to be the set of admissible laminar families that describe the structure of a Steiner tree with $b$ commodities, where an admissible laminar family is one that contains the set of all commodities, and all the singletons. For laminar family $l\in\mathcal{L}_b$, we define $S(l)$ as the set of subsets of $K$ that are present in laminar family $l$.

We say that $\hat{s}\in S(l)$ is the {\it parent} set of $s'\in S(l)$, if $s'$ is one of the sets obtained when $\hat{s}$ is split according to laminar family $l$. Moreover, we say that $s'$ is a {\it child} set of $\hat{s}$ in $l$. Notice that, for all $s'\in S(l)$ with $|s'|<|K|$, there is exactly one parent set of $\hat{s}$ in $S(l)$. Also, for each set $\hat{s}\in S(l)$ with $|\hat{s}|\geq 2$, there is at least two child sets of $\hat{s}$ in $S(l)$. We say that a directed Steiner tree $T$ in $D$ follows an $(r,l)$ {\it structure}, if $T$ is an arborescence rooted in $r$ that follows the splitting sequence defined by laminar family $l\in\mathcal{L}_b$.

Finally, we define $\mathcal{Z}_l$ to be the sub-problem of laminar family $l\in\mathcal{L}_b$, where we look for a lowest cost $(r,l)$ structured Steiner tree.

\section{Algorithm to solve $\mathcal{Z}_l$}\label{Sec:Algorithm_Section}

Although $\mathcal{Z}_l$ can be solved by a linear programming algorithm as noted above, we will present in this section a recursive algorithm that is much more efficient.

\subsection{Observations and algorithm intuition}

Consider a fixed laminar family $l\in\mathcal{L}_b$, and let $s\in S(l)$ be such that $2\leq |s|<|K|$. Since we are in a particular laminar family $l$, then we know the way $s$, and its subsets, split. Now, suppose we assume that the path of set $s$ starts at node $i$, then the solution is equivalent to finding a minimum Steiner tree rooted at $i$, with $|s|$ commodities\footnote{This is equivalent to a directed Steiner tree whose root node is $i$, and that has $|s|$ terminal nodes which correspond to $\{t_k\}_{k\in s}$}, that follows the splitting sequence for $s$ defined in $l$. We will denote $l(s)$, to the ``sub-laminar'' family of $l$, when we only focus on $s$.

For any laminar family, the problem reduces to finding the splitting nodes, because once we have the splitting nodes, then we only need to connect the corresponding nodes by the shortest paths between them. The proposed algorithm uses this fact, and since we do not know the splitting nodes in advance, then we need to compute the shortest path between all pairs of nodes in $D$.

Consider the following example. Suppose we want to solve a directed Steiner tree with 4 terminals. Also, suppose we are solving the sub-problem for the laminar family shown in Figure \ref{Fig:Sol1}

\begin{figure}[H]
\begin{center}
\includegraphics*[width=0.6\textwidth]{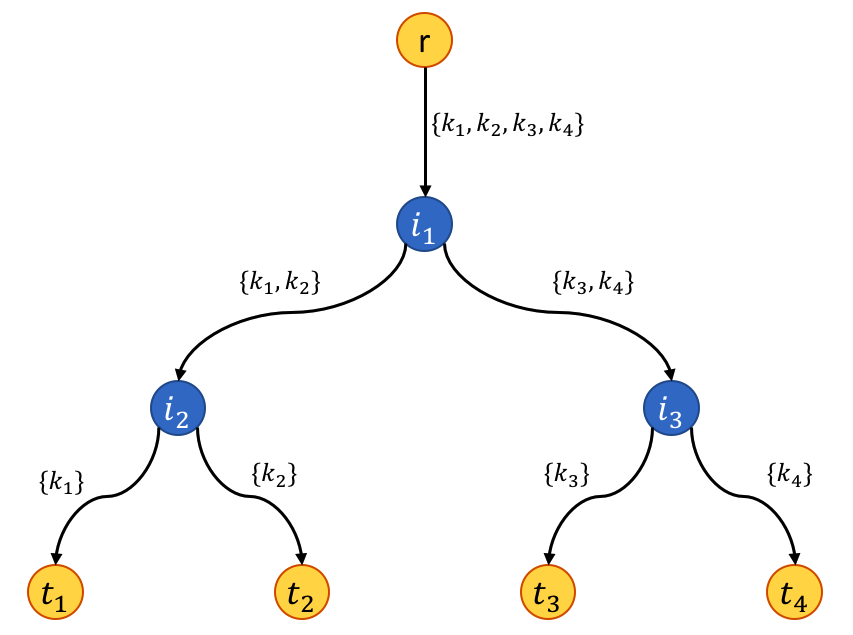}
\caption{Laminar family representation.}
\label{Fig:Sol1}
\end{center}
\end{figure}

We have to find nodes $i_1$, $i_2$, and $i_3$, shown in Figure \ref{Fig:Sol1}. The idea of the proposed algorithm to solve $\mathcal{Z}_l$ is the following. For all sets $s\in S(l)$ with $2\leq |s|<|K|$, and for all nodes $i\in V$, we define the minimum $(i,l(s))$ structured Steiner tree, whose optimal solution will be denoted by $x(i,s)$. Now, consider the set $\{k_1,k_2\}$ in the example shown in Figure \ref{Fig:Sol1}. If we fix a given node $i$ as a root for its corresponding Steiner tree, then we can compute $x(i,\{k_1,k_2\})$ in the following fashion. For all $j\in N$, we compute the sum of 3 shortest paths, which are an $i$-$j$ shortest path, a $j$-$t_1$ shortest path, and a $j$-$t_2$ shortest path. Let $j^*$ be a node such that the sum is the smallest, then $x(i,s)$ is the union of an $i$-$j^*$ shortest path, a $j^*$-$t_1$ shortest path, and a $j^*$-$t_2$ shortest path. We repeat that procedure for all $i\in V$ and we will have computed all possible Steiner trees for $\{k_1,k_2\}$. This procedure is the same for $\{k_3,k_4\}$. For set $\{k_1,k_2,k_3,k_4\}$, the process is similar, since $x(i,\{k_1,k_2,k_3,k_4\})$ is going to be the union of 3 solutions, but in this case, it is the union of a shortest $i$-$j^*$ path, $x(j^*,\{k_1,k_2\})$ and $x(j^*,\{k_3,k_4\})$. Note, that since in this case $K=\{k_1,k_2,k_3,k_4\}$, then we only care about $x(r,\{k_1,k_2,k_3,k_4\})$.

\subsection{Proposed algorithm}

Before stating the algorithm, we introduce the notation used within this section.
\begin{itemize}
\item $sp(i,j)$: Collection of arcs that compose a shortest $i$-$j$ path in $D$.
\item $c(sp(i,j))$: Cost of a shortest $i$-$j$ path in $D$.
\item $z(i,s)$: Cost of solution of set $s$ rooted in $i$.
\item $j^*_s(i)$: Node at which set $s$ splits, for an optimal $(i,l(s))$ structured directed Steiner tree.
\item $N(s)$: Nodes that can be root node for subset $s$. Note that $N(s)=N$ for all $s\in S(l)$, with $s\subset K$. For $s=K$ we have $N(s)=r$.
\end{itemize}

Observe that, for all $k\in K$, $z(i,k)=c(sp(i,t_k))$. Since we only focus on laminar families whose tree representation is a full binary tree (see Proposition 5 in \cite{siebert2019linear}), then each set with at least 2 elements has exactly 2 children. We denote $s_1$ and $s_2$ to be the child sets of set $s\in S(l)$, $|s|\geq 2$. As an abuse of notation, we define $x(i,s)$ as follows $x(i,s)=sp(i,j)+x(j,s_1)+x(j,s_2)$, which means that the solution of the directed Steiner tree, rooted in $i$ using sets in $s$, is the union of a shortest $i$-$j$ path, and the solutions of its two child sets $s_1$ and $s_2$ rooted in $j$, where $j$ is the splitting node of set $s$. 

The recursion to compute $z(i,s)$ and $x(i,s)$ is the following,
\begin{align*}
z(i,s) &= \min_{j\in N}\Big\{c(sp(i,j))+z(j,s_1)+z(j,s_2)\Big\} & \forall i\in N(s), s\in S(l): |s|\geq 2\\
j^*_s(i) &\in \argmin_{j\in N}\Big\{c(sp(i,j))+z(j,s_1)+z(j,s_2)\Big\} & \forall i\in N(s), s\in S(l): |s|\geq 2\\
x(i,s) &= sp(i,j^*_s(i))+x(j^*_s(i),s_1)+x(j^*_s(i),s_2)& \forall i\in N(s), s\in S(l): |s|\geq 2
\end{align*}
Note that for $s=K$, we only need to compute $z(r,K)$ and $j^*_K(r)$. Assume that $S(l)$ is ordered in increasing order of the cardinality of its elements. Then, the algorithm to solve $\mathcal{Z}_l$ is the following,

\begin{algorithm}[H]
\caption{Compute $x(i,s)$ and $z(i,s)$ for $s\in S(l), |s|\geq 2$}\label{Alg:DP_for_Z_l}
\begin{algorithmic}[1]
\State Set $x(i,k)=sp(i,t_k)$ for all $k\in K,i\in V$.
\State Set $z(i,k)=c(sp(i,t_k))$ for all $k\in K,i\in V$.
\State Set $z(i,s)=+\infty$ for all $s\in S(l), |s|\geq 2,i\in N$
\For {$s\in S(l), |s|\geq 2$}
\For {$i\in N(s)$}
\For {$j\in N$}
\If {$c(sp(i,j))+z(j,s_1)+z(j,s_2)<z(i,s)$}
\State $x(i,s) \leftarrow sp(i,j)+x(j,s_1)+x(j,s_2)$
\State $z(i,s) \leftarrow c(sp(i,j))+z(j,s_1)+z(j,s_2)$
\EndIf
\EndFor
\EndFor
\EndFor
\Return $x(r,K)$ and $z(r,K)$.
\end{algorithmic}
\end{algorithm}

First, note that since the elements in $S(l)$ are ordered in increasing order of their cardinality, then for all $s\in S(l)$ with $|s|\geq 2$ we compute $x(i,s_1)$ and $x(i,s_2)$ before computing $x(i,s)$ and its cost $z(i,s)$, since $|s_1|<|s|$ and $|s_2|<|s|$. Now, the {\it for loop} in step 5 is looping over all possible root nodes for set $s$. The {\it for loop} in step 6, loops over all splitting nodes for set $s$. In step 7, we check whether the current best solution can be improved. If the solution can be improved, then the best solution is updated. Finally, the solution is given by $x(r,K)$ whose cost if $z(r,K)$. 

Note that the number of sets in $S(l)$, with at least 2 elements, is $b-1$, since the tree representation of $l$ is a full binary tree. Moreover, we have to visit at most $n^2$ pairs of nodes for each set in $S(l)$, with cardinality of at least 2, where $n=|V|$. Consequently, if the number of terminals is fixed, the complexity of the proposed algorithm is $\mathcal{O}(n^2)$.

One can think of this algorithm as a simplified version of the algorithm proposed in \cite{dreyfus1971steiner}. The main difference is that the algorithm proposed \cite{dreyfus1971steiner} has to decide the way each set is split, while in this case, since we are in a particular laminar family, the split for every set is fixed.

\subsection{Solution Improvements}\label{Sec:Sol_Improvement}

For a given laminar family $l\in\mathcal{L}_b$, an optimal solution to $\mathcal{Z}_l$ may not correspond to a directed Steiner tree, but in the support of the solution we may have a directed Steiner tree, with a different structure, i.e., from a different laminar family in $\mathcal{L}_b$ (see Proposition 1 of \cite{siebert2019linear}). 

Figure \ref{Fig:Comparisson_Feas_Sol} shows an example where an optimal solution of a laminar family $l$ is not a directed Steiner tree. For the example, let $S(l)=\left\{\{k_1,k_2,k_3\},\{k_1,k_2\},\{k_1\},\{k_2\},\{k_3\}\right\}$.

\begin{figure}[H]
\begin{subfigure}{.5\textwidth}
  \centering
 \includegraphics[width=0.9\textwidth]{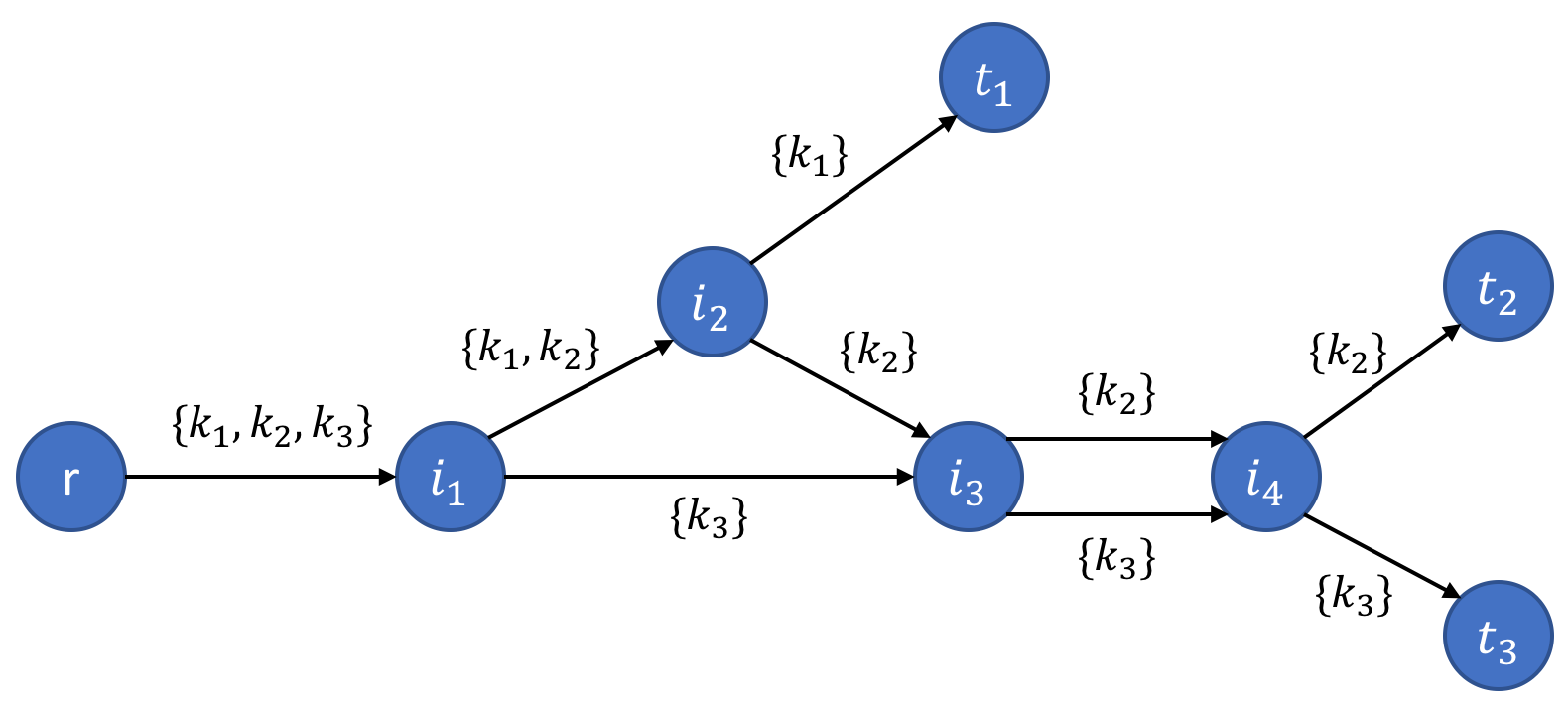}
  \caption{Optimal solution $x$ for $\mathcal{Z}_{l}$.}
  \label{fig:Feas1}
\end{subfigure}
\begin{subfigure}{.5\textwidth}
  \centering
 \includegraphics[width=0.9\textwidth]{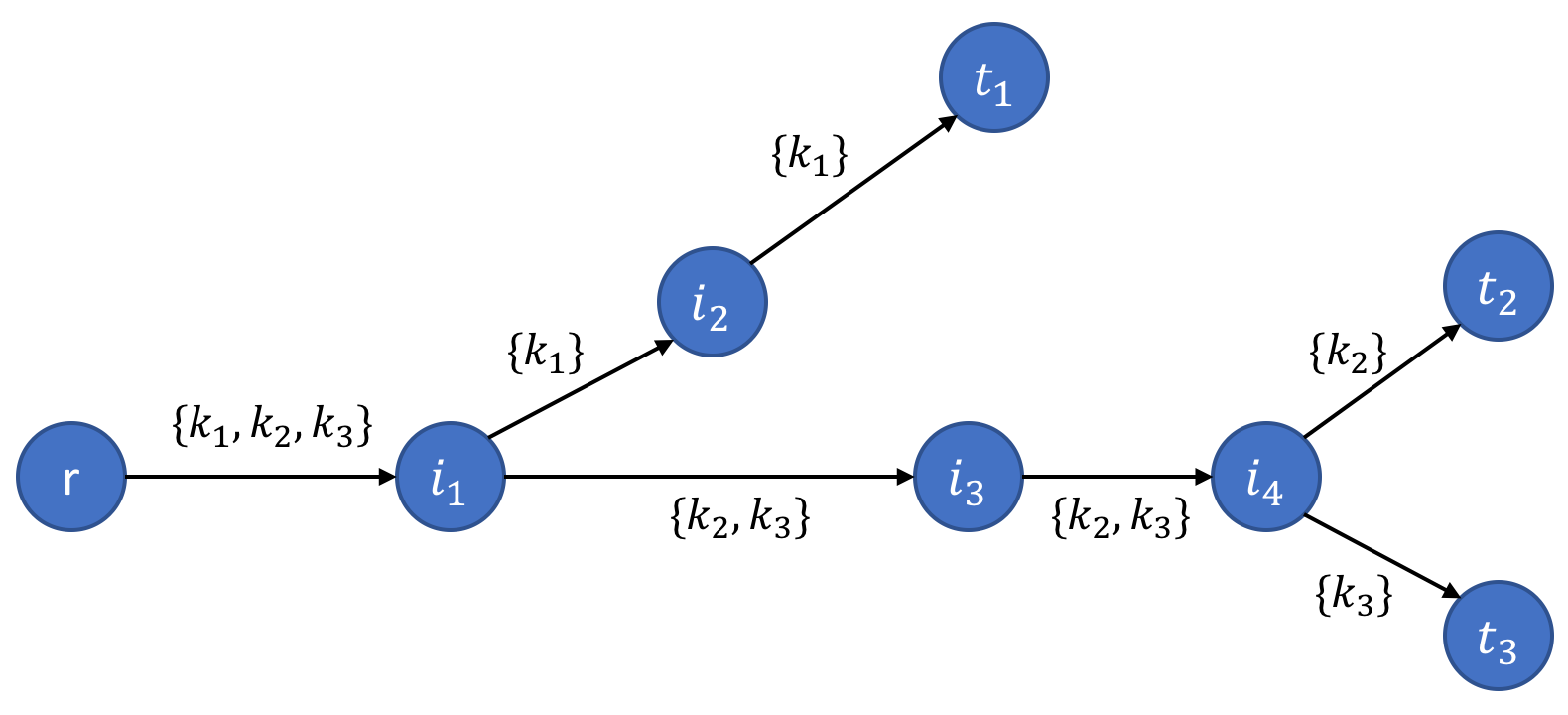}
  \caption{Steiner tree with different structure, contained in support of $x$.}
  \label{fig:Feas2}
\end{subfigure}
\caption{Optimal solution $x$ of $\mathcal{Z}_{l}$, which is not a directed Steiner tree, and a Steiner tree, with different structure contained in support of $x$.}
\label{Fig:Comparisson_Feas_Sol}
\end{figure}

We observe in Figure \ref{fig:Feas1}, an optimal solution $x$ of $\mathcal{Z}_{l}$, which is not a directed Steiner tree. Note that commodities $k_2$ and $k_3$ share arcs, but not using set $\{k_2,k_3\}$ because this set is not in $l$. In contrast, there is a Steiner tree, with $\hat{l}$ structure, contained in the support of $x$, where $S(\hat{l})=\left\{\{k_1,k_2,k_3\},\{k_2,k_3\},\{k_1\},\{k_2\},\{k_3\}\right\}$.

Since the cost vector is positive, then for every set $s\in S(l)$, there are no cycles in any optimal solution of $\mathcal{Z}_l$. Consequently, the only way the solution is not a directed Steiner tree is when, at least, two different sets reach the same node. Following the notation of \cite{siebert2019linear}, if $x=(f,\hat{y},\overline{y},w)$ is an optimal solution for $\mathcal{Z}_l$, then there exists $i\in V$, such that for two sets $s_1,s_2\in S(l)$, we have that $\sum_{a\in\delta^-(i)}f^{s_1}_a=1$ and $\sum_{a\in\delta^-(i)}f^{s_2}_a=1$. Whenever this happens, we have that the support of $f$ vector does not correspond to a directed tree rooted in $r$. 

By construction of the solution, there exists at least 1 path from $r$ to $t_k$ for all $k\in K$. Therefore, we can construct an $r$-arborescence using the arcs of the support of $f$, such that all the terminal nodes $t_k$ are reached from $r$, and that all leaves of such arborescence correspond to terminal nodes. Let $A(l)=\{a\in A:\sum_{s\in S(l)}f_a^s\geq 1\}$ be the support of an optimal solution $x$ to $\mathcal{Z}_l$, and let $T\subseteq A(l)$ be the set of arcs used by a minimum cost $r$-arborescence constructed in the subgraph defined by $A(l)$. If all leaves of $T$ are terminal nodes, then $T$ is a directed Steiner tree. On the other hand, if at least 1 leaf is a Steiner node, then we can prune such leaves and still have an $r$-arborescence with a path connecting $r$ to $t_k$ for all $k\in K$. If the tree after the first round of pruning still has leaves that are Steiner nodes, then we can prune such a tree again. After a finite number of rounds of pruning, all leaves of the $r$-arborescence will correspond to terminal nodes, which will correspond to a directed Steiner tree.

Once we have a rooted Steiner tree, we can identify its structure. We just have to see the way the paths from the root $r$ to every terminal $t_k$, share arcs. Since we are considering admissible laminar families, then the set of all commodities and all the singletons are always present. Consequently, by identifying the nodes where a split occurs, we can determine into which sets each set is partitioned, and therefore, identify the parent-child relationship between sets. 

It may happen that the laminar family of such a tree may not have a full binary tree representation, or in other words, we may have that a set has more than 2 children. In this case, there are several laminar families in $\mathcal{L}_b$ that have the constructed solution as a feasible solution. When this happens, we randomly select one of those laminar families as the laminar family of the constructed solution, using Algorithm \ref{Alg:Random_Laminar}. Lets call the selected laminar family $\hat{l}$. Finally, since the constructed solution may not be an optimal solution for $\hat{l}$, we solve the sub-problem $\mathcal{Z}_{\hat{l}}$ to get a new solution.

We describe the algorithm to construct a random laminar family from a laminar family, whose tree representation is not a full-binary tree. We refer to a laminar family $l$ as the collection of sets contained in the family, denoted by $S(l)$.

\begin{algorithm}[H]
\caption{Algorithm $randomLaminar(l)$}\label{Alg:Random_Laminar}
\begin{algorithmic}[1]
\Require Collection of sets $S(l)$.
\State Set $S'=\{s\in S(l):s\text{ has at least 3 children}\}$
\While{$S'\ne\emptyset$}
\State Select arbitrary $\hat{s}\in S'$
\State Let $\hat{S}=\{s\in S(l):s\text{ is child set of }\hat{s}\}$
\While{$|\hat{S}|\geq 3$}
\State Let $s_1$ and $s_2$ be randomly selected elements of $\hat{S}$
\State $s'\leftarrow s_1\cup s_2$
\State $S(l)\leftarrow S(l)\cup s'$
\EndWhile
\EndWhile
\Return $S(l)$.
\end{algorithmic}
\end{algorithm}

Algorithm \ref{Alg:Random_Laminar} creates a laminar family $\hat{l}$, with a full-binary tree representation, from an admissible laminar family $l$, whose tree representation is not a full-binary tree. For every set $\hat{s}$ in $S(l)$ that has at least 3 children, we take two random children $s_1$ and $s_2$ (step 6), and we create a new set from the union of $s_1$ and $s_2$, which is added to $S(l)$ (steps 7 and 8). Note that this reduces the number of child sets of $\hat{s}$ by 1, since $s_1\cup s_2$ is now a child of $\hat{s}$, while $s_1$ and $s_2$ are now children of $s_1\cup s_2$.

\section{Laminar family neighborhood characterization}\label{Sec:Neighborhood_Characterization}

Our final goal is to propose a local search heuristic algorithm to obtain good solutions to the directed Steiner tree problem. This local search algorithm starts from a given laminar family, and then moves to a promising neighbor laminar family. This process is performed several times. Each laminar family subproblem is solved in polynomial time using the algorithm presented in Section \ref{Sec:Algorithm_Section}. 

Consequently, we need to have a characterization of the neighborhood of each laminar family. Recall that, all the laminar families considered in the proposed approach have a unique corresponding full binary tree representation. Therefore, it suffices to define the neighborhood of a laminar family, as the neighborhood of its corresponding tree representation. For the rest of this section, we define $T$ to be a full binary tree, i.e., each node of $T$ has a degree 3 or 1.

In the literature, there are several ways proposed to characterize the neighborhood of a tree. The most commonly used are the Nearest Neighbor Interchange (NNI), the Subtree Pruning and Regrafting (SPR), and the Tree Bisection Reconnection (TBR) \cite{bryant2004splits,felsenstein2004inferring}. 

We decide to use the SPR process to construct the neighbors of a laminar family. In the studied literature, SPR was widely used over NNI and TBR. Furthermore, we consider the neighborhood size under SPR, which is $4(b-2)(b-3)$ \cite{felsenstein2004inferring}, to be adequate for our case. Finally, we consider that SPR is reasonably easy to be adapted to our setting.

The SPR process corresponds to selecting an edge $\{i,j\}$ of tree $T$. The edge $\{i,j\}$ is removed from $T$, dividing $T$ in two connected subtrees $T_i$ and $T_j$, containing $i$ and $j$ respectively. For simplicity of the argument, suppose that $T_i$ and $T_j$ have more than 3 nodes. Note that nodes $i$ and $j$ will have degree 2, after removing edge $\{i,j\}$. We leave $T_i$ as is, and we replace the two incident edges to $j$ in $T_j$, by an edge connecting the two neighbors of $j$ in $T_j$. Then, we select an edge of the updated $T_j$, and then we subdivide it creating a new node $k$. Finally, we create a new edge $\{i,k\}$ connecting node $i$ of subtree $T_i$, and node $k$ of subtree $T_j$. 

\begin{figure}[H]
\begin{subfigure}{.5\textwidth}
  \centering
 \includegraphics[width=0.7\textwidth]{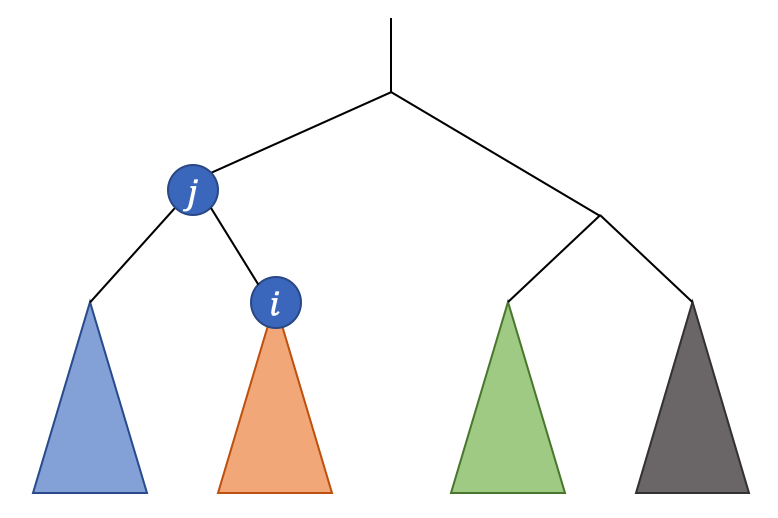}
  \caption{Tree $T$.}
  \label{Subfig:SPR_1}
\end{subfigure}
\begin{subfigure}{.5\textwidth}
  \centering
 \includegraphics[width=0.8\textwidth]{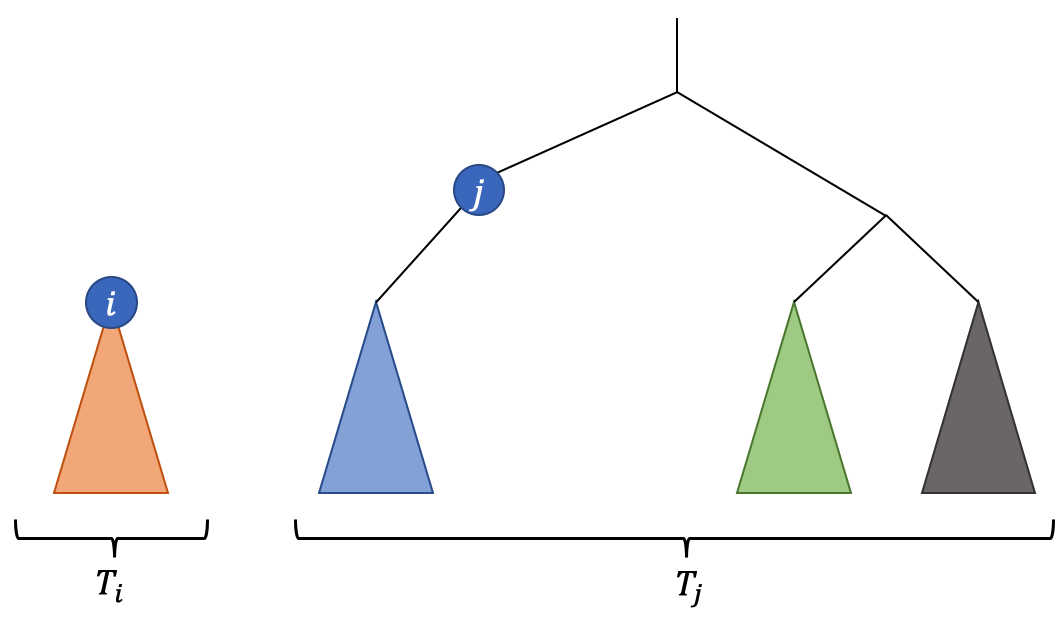}
  \caption{Subtrees $T_i$ and $T_j$, after removing edge $\{i,j\}$.}
  \label{Subfig:SPR_2}
\end{subfigure}\\
\begin{subfigure}{.5\textwidth}
  \centering
 \includegraphics[width=0.8\textwidth]{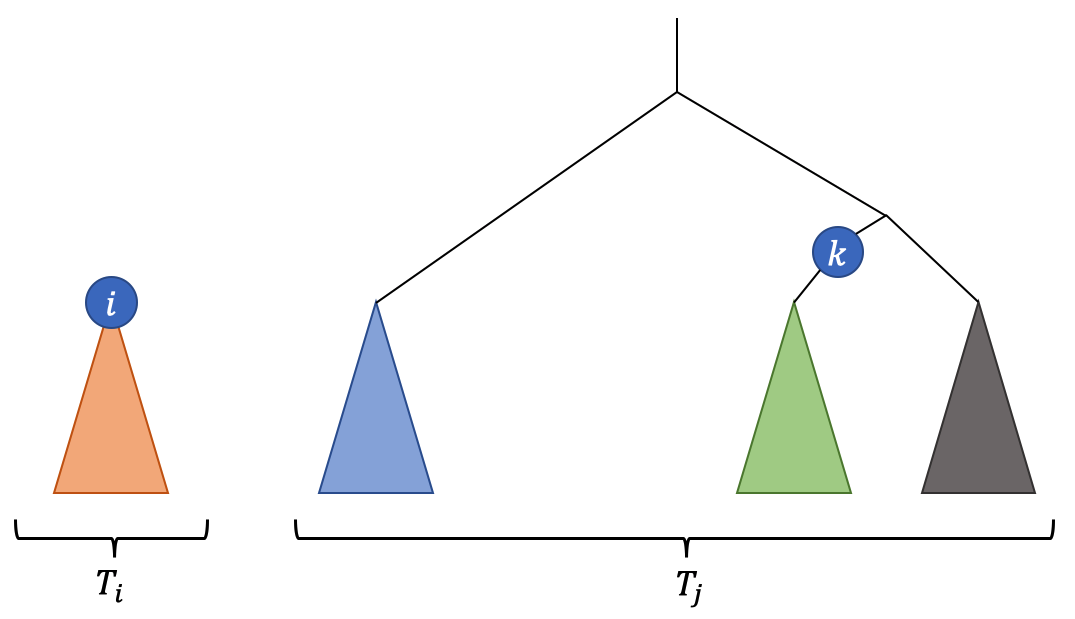}
  \caption{Original subtree $T_i$, and new subtree $T_j$.}
  \label{Subfig:SPR_3}
\end{subfigure}
\begin{subfigure}{.5\textwidth}
  \centering
 \includegraphics[width=0.7\textwidth]{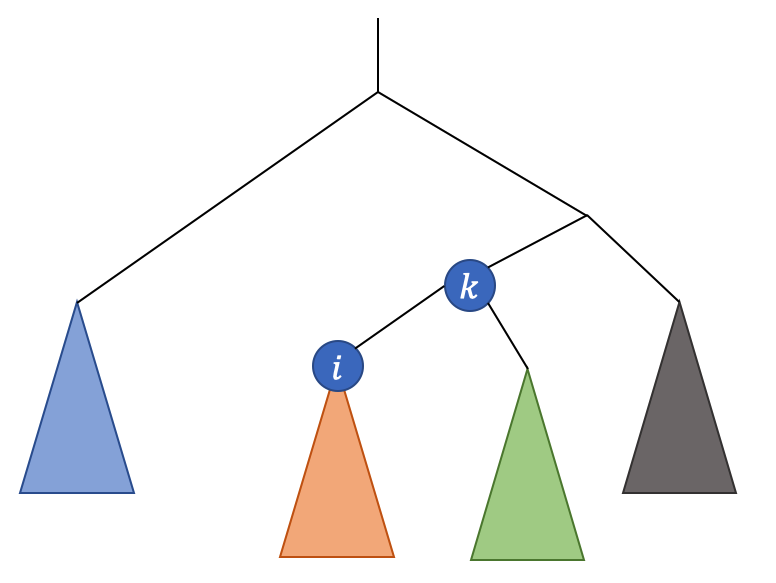}
  \caption{Neighbor of tree $T$ using SPR procedure.}
  \label{Subfig:SPR_4}
\end{subfigure}
\caption{A tree $T$ and a neighbor under the SPR procedure.}
\label{Fig:SPR}
\end{figure}

Figure \ref{Fig:SPR_In_Laminar} shows an example of the SPR process applied to our setting.

\begin{figure}[H]
\begin{subfigure}{.5\textwidth}
  \centering
 \includegraphics[width=0.8\textwidth]{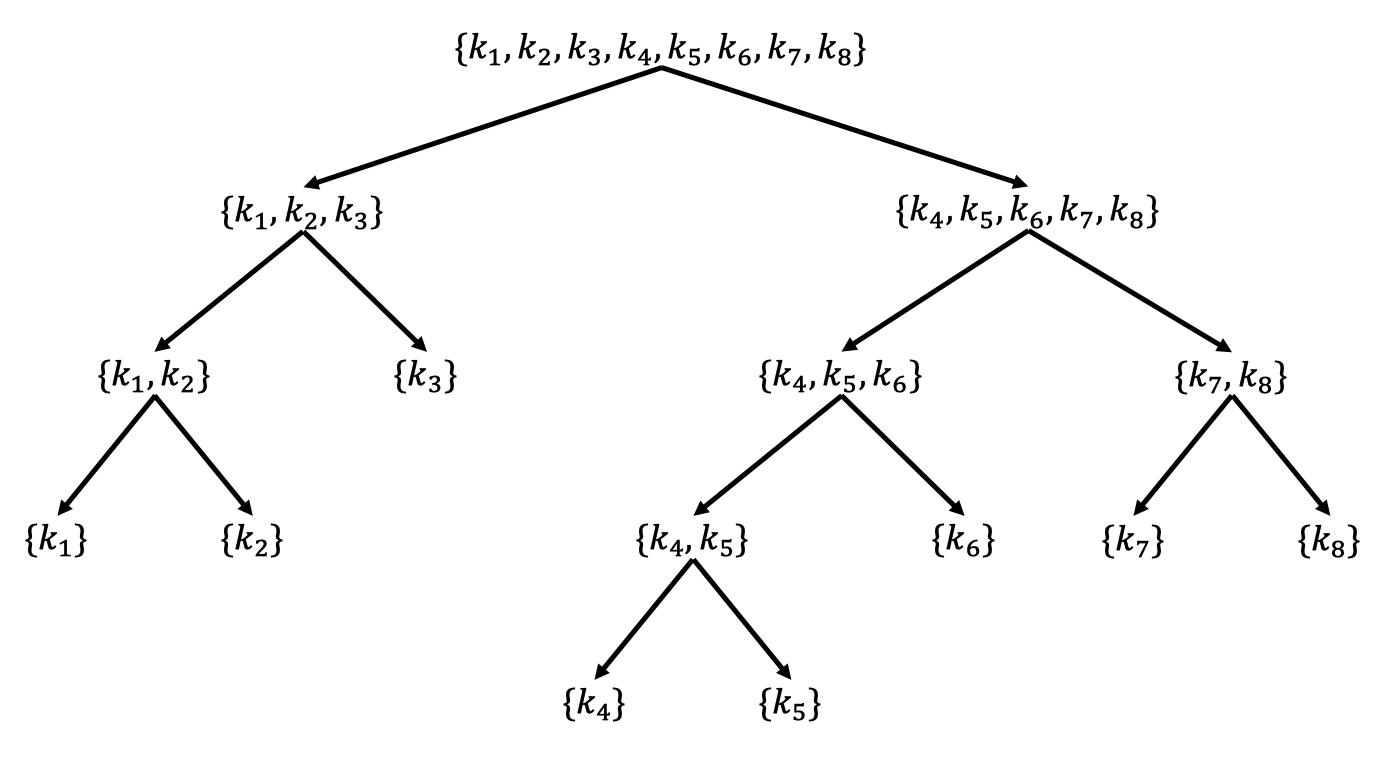}
  \caption{Original tree $T$.}
  \label{Subfig:SPR_Laminar_1}
\end{subfigure}
\begin{subfigure}{.5\textwidth}
  \centering
 \includegraphics[width=0.8\textwidth]{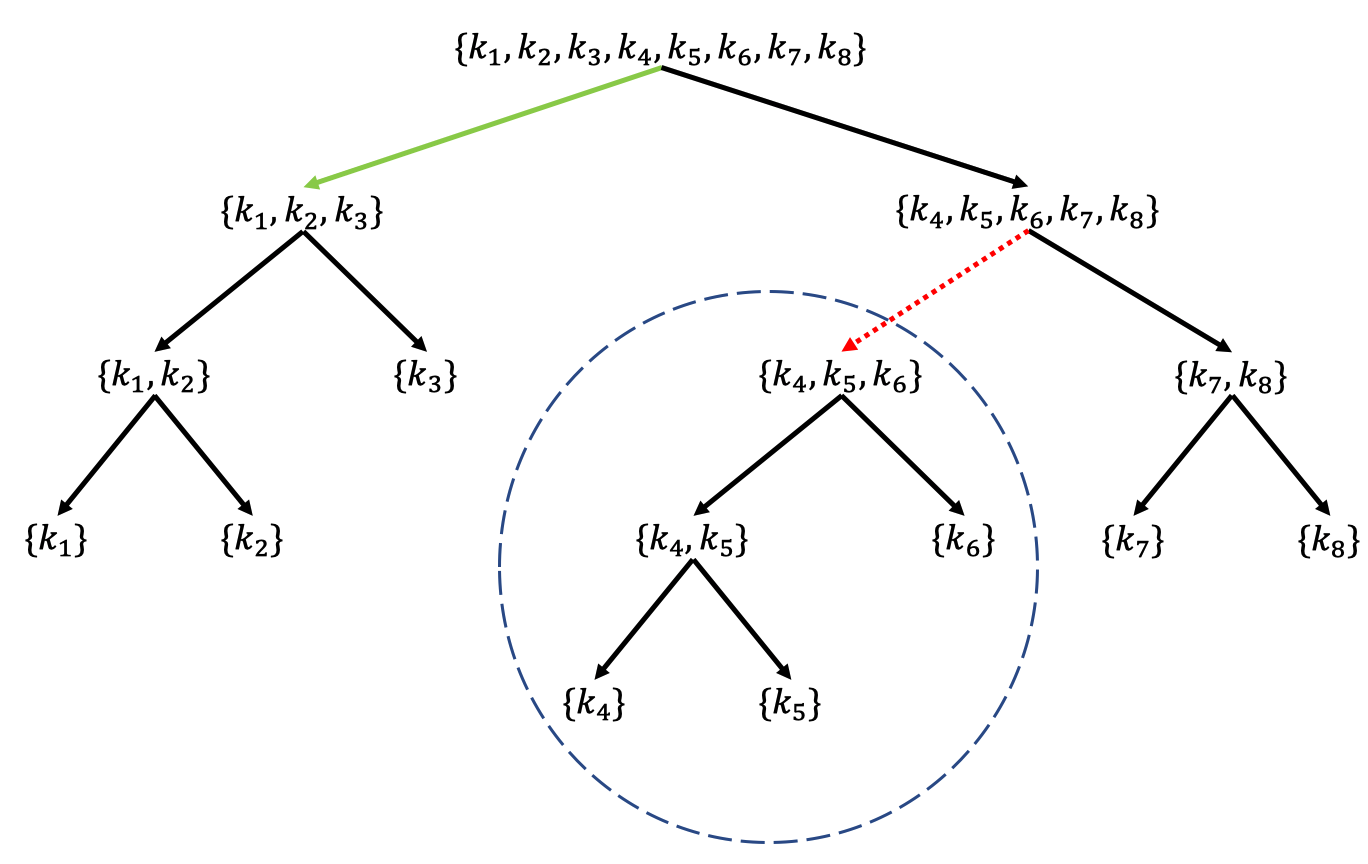}
  \caption{The subtree within dashed circle is pasted in green edge. The red dashed edge is removed.}
  \label{Subfig:SPR_Laminar_2}
\end{subfigure}\\
\begin{subfigure}{.5\textwidth}
  \centering
 \includegraphics[width=0.8\textwidth]{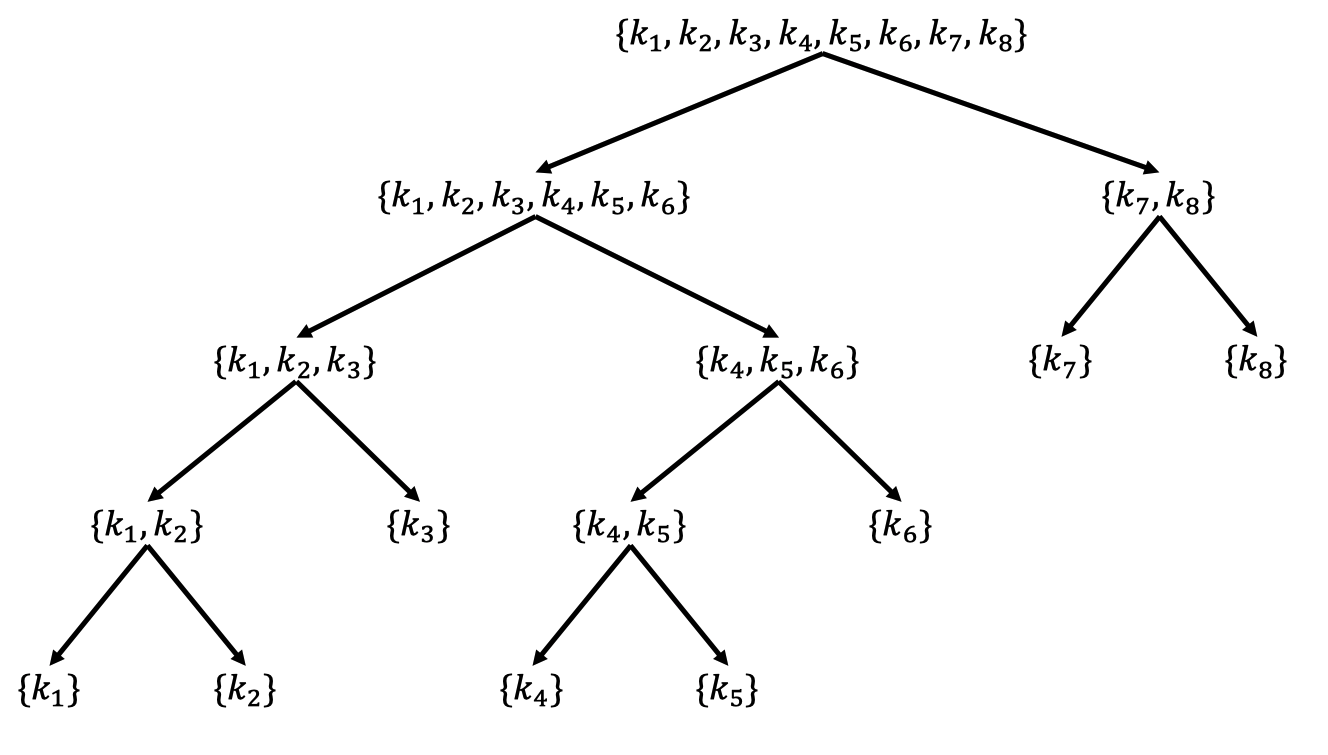}
  \caption{SPR neighbor of tree $T$.}
  \label{Subfig:SPR_Laminar_3}
\end{subfigure}
\caption{SPR neighbor of a laminar family.}
\label{Fig:SPR_In_Laminar}
\end{figure}

In Figure \ref{Subfig:SPR_Laminar_1}, we can see the original tree representation of a given laminar family. In Figure \ref{Subfig:SPR_Laminar_2}, we can see the subtree which is going to be removed and then regrafted, which is highlighted with the blue dashed circle. The red dashed edge is going to be removed, and the green edge is where the subtree is going to be regrafted. Figure \ref{Subfig:SPR_Laminar_3} shows the SPR neighbor, note that one set was removed $\{k_4,k_5,k_6,k_7,k_8\}$, and one was added $\{k_1,k_2,k_3,k_4,k_5,k_6\}$. This has to be done since the neighbor is the tree representation of a laminar family.

Note that we only need to make a few extra computations to have the optimal solution to the laminar family shown in Figure \ref{Subfig:SPR_Laminar_3}. Since we are using the algorithm presented in Section \ref{Sec:Algorithm_Section}, we only need to compute $z(i,\{k_1,\ldots,k_6\})$ for all $i\in V$, and recompute $z(r,K)$, since the child sets of $K=\{k_1,\ldots,k_8\}$ are different in the new laminar family. Everything else can be reused from the computations done to get an optimal solution of laminar family shown in Figure \ref{Subfig:SPR_Laminar_1}.

\section{Simulated annealing}\label{Sec:Simulated_Annealing}

Simulated annealing is a metaheuristic, which is widely used in local search frameworks \cite{kirkpatrick1983optimization,van1987simulated, aarts1988simulated,bertsimas1993simulated}. The main drawback when using local search algorithms, is that we often get stuck in local optima.

To address this in a minimization problem, simulated annealing also allows movements to neighbors with higher cost with a certain probability, which depends on how much worse the new the solution is, and the iteration of the algorithm. The high-level idea of simulated annealing is the following. We set an initial temperature $T_0$, and a starting initial solution. Then, at each iteration $j$, we decrease the temperature of the system having $T_j=f(T_0,j)$, where $f$ is a function depending on $T_0$ and $j$. Also, at each iteration we randomly choose a solution in the neighborhood of the current solution, and we compute the cost difference between the current solution and the neighbor, denoted by $\Delta_j$. If $\Delta_j<0$ then we move to the neighbor solution. If $\Delta_j\geq 0$ we move to the neighbor solution with probability $p(\Delta_j,T_j)$. Note that the probability is a function of the difference in cost of the solutions and the current temperature of the system.

There are three main decisions to make when using simulated annealing. How we set the initial temperature $T_0$, which function we use to reduce the temperature of the system, and what probability function we use to accept increasing cost neighbors. We use the probability function given by
\begin{align*}
p(\Delta_j,T_j)= \left[1+exp\left(\frac{\Delta_j}{T_j}\right)\right]^{-1} \in\left(0,\frac{1}{2}\right)
\end{align*}
Note that higher $\Delta_j$ leads to lower $p(\Delta_j,T_j)$, and smaller $T_j$ leads to lower acceptance probability. 

Since we want $\Delta_j$ and $T_j$ to be of the same order of magnitude, we use the cost of the first solution found as $T_0$. Finally, we use the temperature cooling function given by
\begin{align*}
T_j= f(T_0,j)=T_0(0.95)^j
\end{align*}

The simulated annealing framework is given in Algorithm \ref{Alg:Simulated_Annealing}.

\begin{algorithm}[H]
\caption{Simulated Annealing framework to solve directed Steiner tree problem}\label{Alg:Simulated_Annealing}
\begin{algorithmic}[1]
\State Set $N_{iter}$, $j=1$.
\State $l_{current}\leftarrow initial\_laminar()$, $x_{current}\leftarrow solve\_DP(l_{current})$, $c_{current}\leftarrow c(x_{current})$, $T_0\leftarrow c_{current}$
\State $l_{best}\leftarrow l_{current}$, $x_{best}\leftarrow x_{current}$, $c_{best}\leftarrow c_{current}$
\While {$j\leq N_{iter}$}
\State Set $T_j= f(T_0,j)$
\State $l_{new}\leftarrow SPR(l_{current})$
\State $x_{new}\leftarrow solve\_DP(l_{new})$
\State $c_{new}\leftarrow c(x_{new})$
\If {$c_{new}<c_{current}$}
\State $l_{current}\leftarrow l_{new}$
\State $x_{current}\leftarrow x_{new}$
\State $c_{current}\leftarrow c_{new}$
\If {$c_{new}<c_{best}$}
\State $l_{best}\leftarrow l_{new}$
\State $x_{best}\leftarrow x_{new}$
\State $c_{best}\leftarrow c_{new}$
\EndIf
\Else
\State $\Delta_j = c_{new}-c_{current}$
\State Sample $u \sim U(0,1)$
\If {$u\leq p(\Delta_j,T_j)$}
\State $l_{current}\leftarrow l_{new}$
\State $x_{current}\leftarrow x_{new}$
\State $c_{current}\leftarrow c_{new}$
\EndIf
\EndIf
\State $j\leftarrow j+1$
\EndWhile
\Return $x_{best}$
\end{algorithmic}
\end{algorithm}

The first 3 steps correspond to initialization of the algorithm. In the first step, we define the number of iterations $N_{iter}$, and set the iterations counter $j$ to 1. In the second step, we create an initial laminar family using Algorithm \ref{Alg:Single_Linkage_Clustering}, then we compute the best solution to the given laminar family, denoted by $x_{current}$, and the cost of the solution. We also define the initial temperature value. In the third step, we save the current solution as the best solution found so far.

Then, for each iteration $j$ we, first, compute the temperature given by function $f(T_0,j)$, and after that, we create a random SPR neighbor of the current laminar family, and we compute the best solution of the neighbor and its cost (steps 6 to 8). We check whether the solution of the neighbor is better than the current solution, and if it is, we update the current solution. Moreover, if the neighbor solution is better than the best solution found so far, we update the best solution (steps 9 to 16). If the solution of the neighbor is worse than the current solution, we compute the cost difference between the solutions and we sample a uniform $(0,1)$ random variable $u$. If the value of $u$ is less than or equal to the acceptance probability given by $p(\Delta_j,T_j)$, then we update the current solution (steps 17 to 23). Finally, we increase the iterations counter by 1.

The process to construct the initial structure to solve is the following. First, we create a complete graph $G'$ whose nodes are the terminal nodes. For every edge $e'=(t',t'')$, its cost is the length of the shortest path between $t'$ and $t''$ in the original graph $G$. Second, we construct a minimum spanning tree $T'$ in $G'$. Finally, we run Algorithm \ref{Alg:Single_Linkage_Clustering} to construct an initial laminar family using $T'$ as input.

\begin{algorithm}[H]
\caption{Algorithm $initial\_laminar()$, which creates an initial laminar family}\label{Alg:Single_Linkage_Clustering}
\begin{algorithmic}[1]
\Require Tree $T'$, we assume $T'$ is a set of edges ordered in increasing order of their lengths
\State Set $S(l)\leftarrow \{\{k_1\},\{k_2\},\ldots,\{k_b\}\}$
\For {$e'\in T'$}
\State Let $e'=(t',t'')$, and let $k'$ and $k''$ be the commodities of terminals $t'$ and $t''$, respectively.
\State Let $s'$ be largest set in $S(l)$ containing $k'$
\State Let $s''$ be largest set in $S(l)$ containing $k''$
\State $\hat{s}\leftarrow s'\cup s''$
\State $S(l)\leftarrow S(l)\cup\hat{s}$
\EndFor
\Return $S(l)$.
\end{algorithmic}
\end{algorithm}

Algorithm \ref{Alg:Single_Linkage_Clustering} constructs a laminar family $l$ based on the single linkage clustering algorithm, which is a widely used aglomerative clustering method \cite{xu2005survey}. It starts with the collection $S(l)$ of all the singletons. Then, it goes over the set of all edges of $T'$, which are assumed to be ordered in increasing order of their lengths. For each edge $e'=(t',t'')$, we take the commodities $k'$ and $k''$ of terminals $t'$ and $t''$, respectively. Then, we take sets $s'$ and $s''$, which are  the largest sets in $S(l)$ that contain $k'$ and $k''$, respectively. Finally, we create a set $\hat{s}$ to be the union of $s'$ and $s''$, and we add it to $S(l)$. After we visit all edges, we have that $S(l)$ is a collection of sets that forms an admissible laminar family.

\subsection{Simulated annealing with solution improvement}

As previously pointed out, sometimes an optimal solution to a laminar family subproblem may not be a directed Steiner tree, which is why it may be beneficial to improve the solution obtained at every iteration of the simulated annealing algorithm. We use the solution improvement routine introduced in Section \ref{Sec:Sol_Improvement} in the simulated annealing framework, which is presented below.

\begin{algorithm}[H]
\caption{Simulated Annealing for directed Steiner tree problem with solution improvement}\label{Alg:Simulated_Annealing_Tester}
\begin{algorithmic}[1]
\State Set $N_{iter}$, $j=1$.
\State $l_{current}\leftarrow random\_laminar()$, $x_{current}\leftarrow solve\_DP(l_{current})$, $c_{current}\leftarrow c(x_{current})$, $T_0\leftarrow c_{current}$
\State $l_{best}\leftarrow l_{current}$, $x_{best}\leftarrow x_{current}$, $c_{best}\leftarrow c_{current}$
\While {$j\leq N_{iter}$}
\State Set $T_j= f(T_0,j)$
\State $l_{new}\leftarrow SPR(l_{current})$
\State $x_{new}\leftarrow solve\_DP(l_{new})$
\State $c_{new}\leftarrow c(x_{new})$
\If {$A(l_{new})$ is not an $r$-arborescence}
\State $T\leftarrow MST(A(l_{new}))$
\State $l_{improved}\leftarrow LaminarFamily(T)$
\If {$l_{improved}$ is not full-binary tree}
\State $l_{improved}\leftarrow SampleFullBinaryTree(l_{improved})$
\EndIf
\State $l_{new}\leftarrow l_{improved}$
\State $x_{new}\leftarrow solve\_DP(l_{new})$
\State $c_{new}\leftarrow c(x_{new})$
\EndIf
\If {$c_{new}<c_{current}$}
\State $l_{current}\leftarrow l_{new}$
\State $x_{current}\leftarrow x_{new}$
\State $c_{current}\leftarrow c_{new}$
\If {$c_{new}<c_{best}$}
\State $l_{best}\leftarrow l_{new}$
\State $x_{best}\leftarrow x_{new}$
\State $c_{best}\leftarrow c_{new}$
\EndIf
\Else
\State $\Delta_j = c_{new}-c_{current}$
\State Sample $u \sim U(0,1)$
\If {$u\leq p(\Delta_j,T_j)$}
\State $l_{current}\leftarrow l_{new}$
\State $x_{current}\leftarrow x_{new}$
\State $c_{current}\leftarrow c_{new}$
\EndIf
\EndIf
\State $j\leftarrow j+1$
\EndWhile
\Return $x_{best}$
\end{algorithmic}
\end{algorithm}

Algorithm \ref{Alg:Simulated_Annealing_Tester} shows the pseudocode of simulated annealing with the solution improvement. The solution improvement, or tester, corresponds to lines 9 to 16. First, we check whether the solution of $l_{new}$ is an $r$-arborescence or not, in case it is not, then it can be improved. In step 10, we define $T$ to be the minimum cost $r$-arborescence whose leaves correspond to terminal nodes, as described above. In line 11, we define $l_{improved}$ to be the laminar family extracted from $T$. In line 12, we check whether $l_{improved}$ has a full binary tree representation or not. In case it is not full-binary, then we sample a full binary tree that contains $l_{improved}$. Finally, we update $l_{new}$, $x_{new}$ and $c_{new}$. 

\subsection{Rectilinear graphs}

The nodes of rectilinear graphs are placed in the $\R^2$ plane and the distance between nodes is given by the $\Vert\cdot\Vert_1$ distance. We can take advantage of this property since we can divide the plane into regions, each one containing a set of terminals, which will be used to create an initial laminar family. For instance, in Figure \ref{Fig:Rectilinear_Graph}, we can see that terminals $t_1$ and $t_7$ may not share arcs in an optimal solution, but it is very likely that $t_1$ shares arcs with $t_2$, and $t_7$ with $t_6$, since they are in similar regions of the plane.

The idea is to first, divide the set of terminal nodes in two sets, which will be interpreted as the first bipartition for the laminar family we want to construct. Then, each set of commodities is divided into another two sets, and so on, until all sets are singletons. In particular, the first partition corresponds to the partition given by clusterizing all the terminals into two clusters. Then each cluster is clusterized again into another two clusters, and so on, until each cluster has size 1. To illustrate this idea see Figure \ref{Fig:Rectilinear_Example}. In Figure \ref{Fig:Rectilinear_Graph}, we only show the root node and the terminal nodes of a rectilinear graph. Figure \ref{Fig:Rectilinear_Graph_Cluster_1} shows a potential partition based on the euclidean distance between terminals. In red dashed ovals, we find the first bipartition, inside each of the red zones, we find the second bipartition which is delimited by the green dashed oval, finally, inside each green zone with at least 2 terminals, we find a new partition denoted by the black dashed circles. Figure \ref{Fig:Rectilinear_Graph_Cluster_1_LaminarFamily} shows the tree representation of the laminar family associated with the partition shown in Figure \ref{Fig:Rectilinear_Graph_Cluster_1}.

\begin{figure}[H]
\centering
\begin{subfigure}[b]{.48\linewidth}
\includegraphics[width=\linewidth]{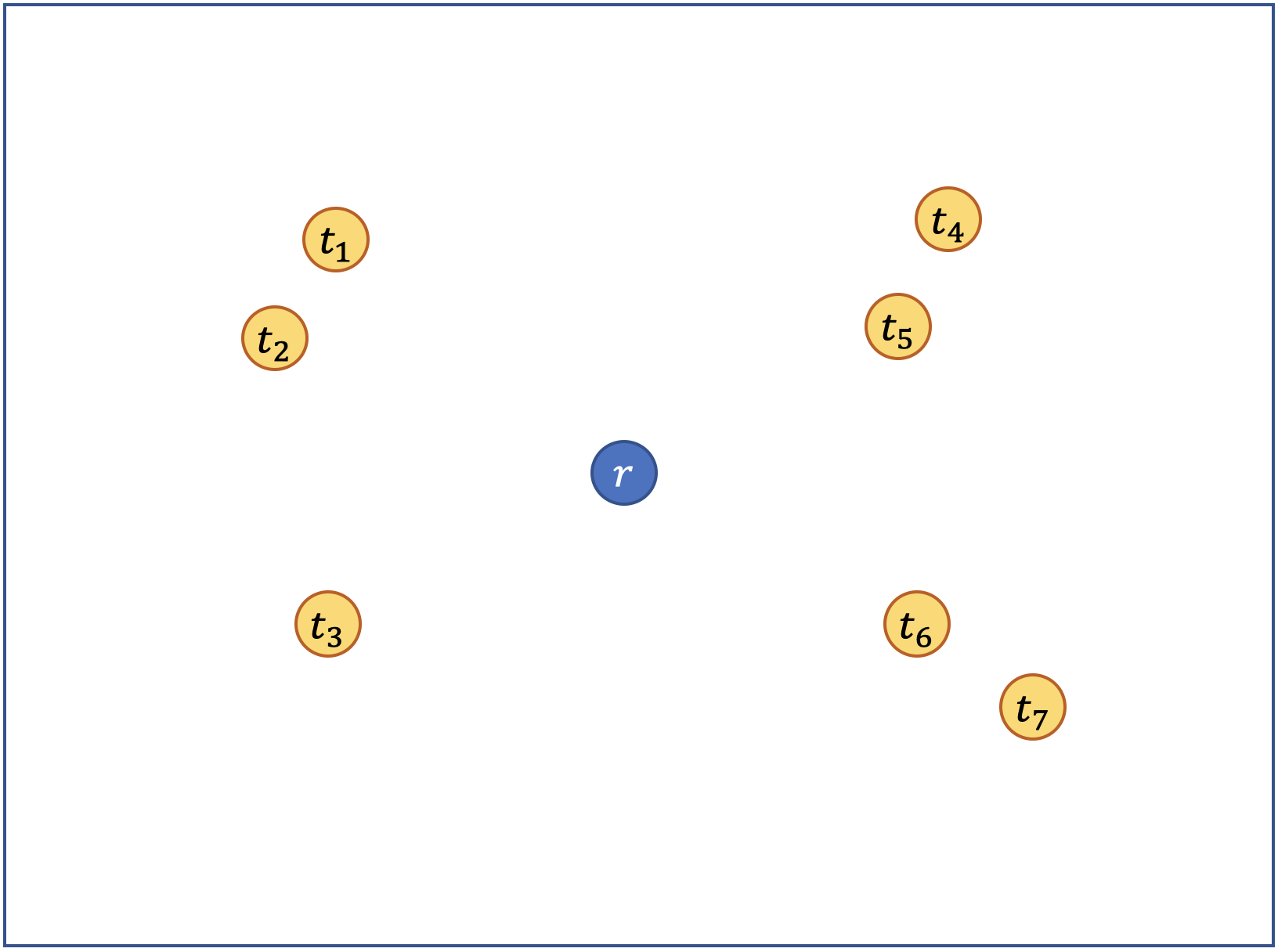}
\caption{Rectilinear graph, only showing root and terminal nodes}\label{Fig:Rectilinear_Graph}
\end{subfigure}

\begin{subfigure}[b]{.48\linewidth}
\includegraphics[width=\linewidth]{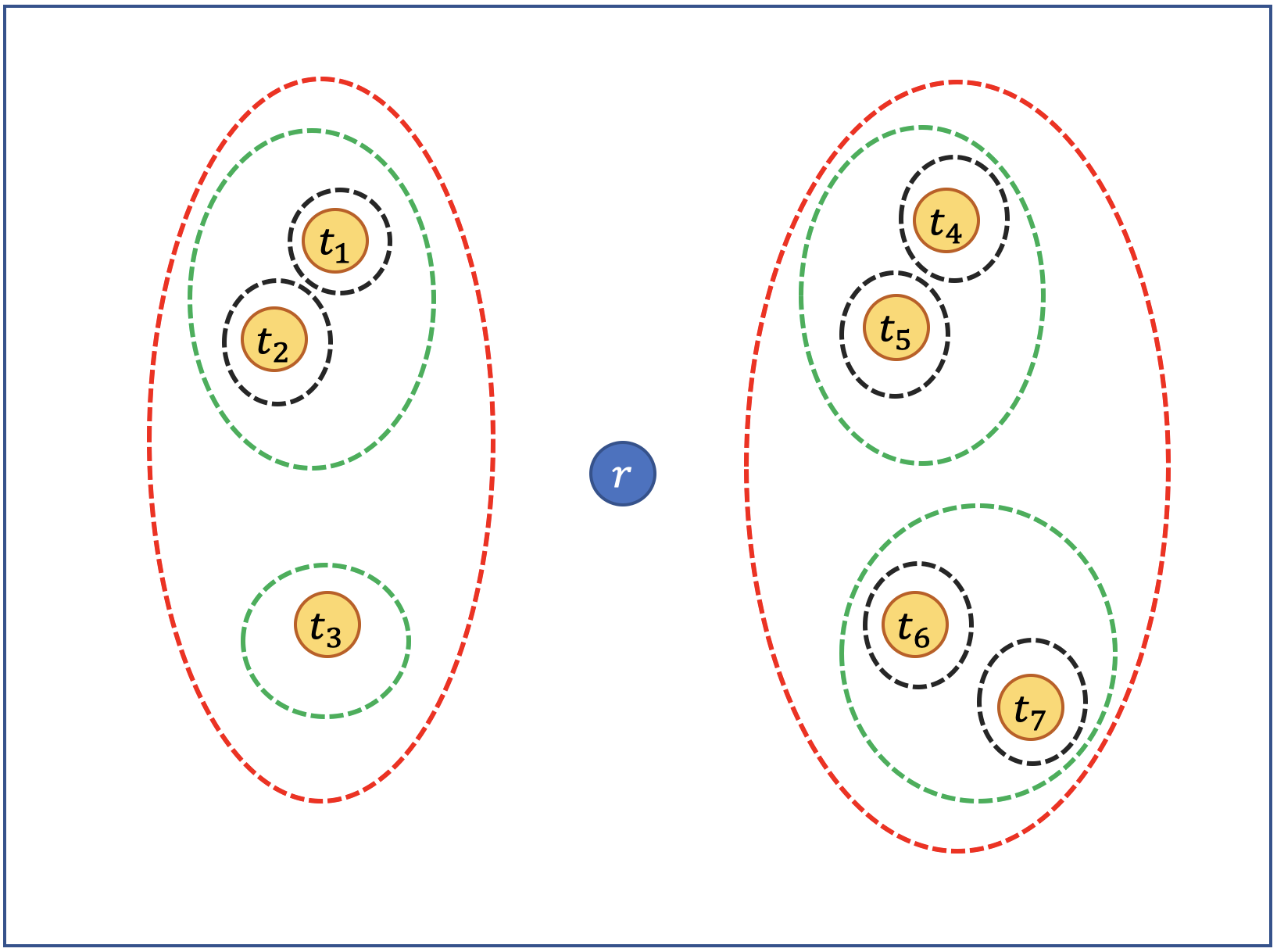}
\caption{Example of a clusterization of terminals in rectilinear graph}\label{Fig:Rectilinear_Graph_Cluster_1}
\end{subfigure}
\begin{subfigure}[b]{.48\linewidth}
\includegraphics[width=\linewidth]{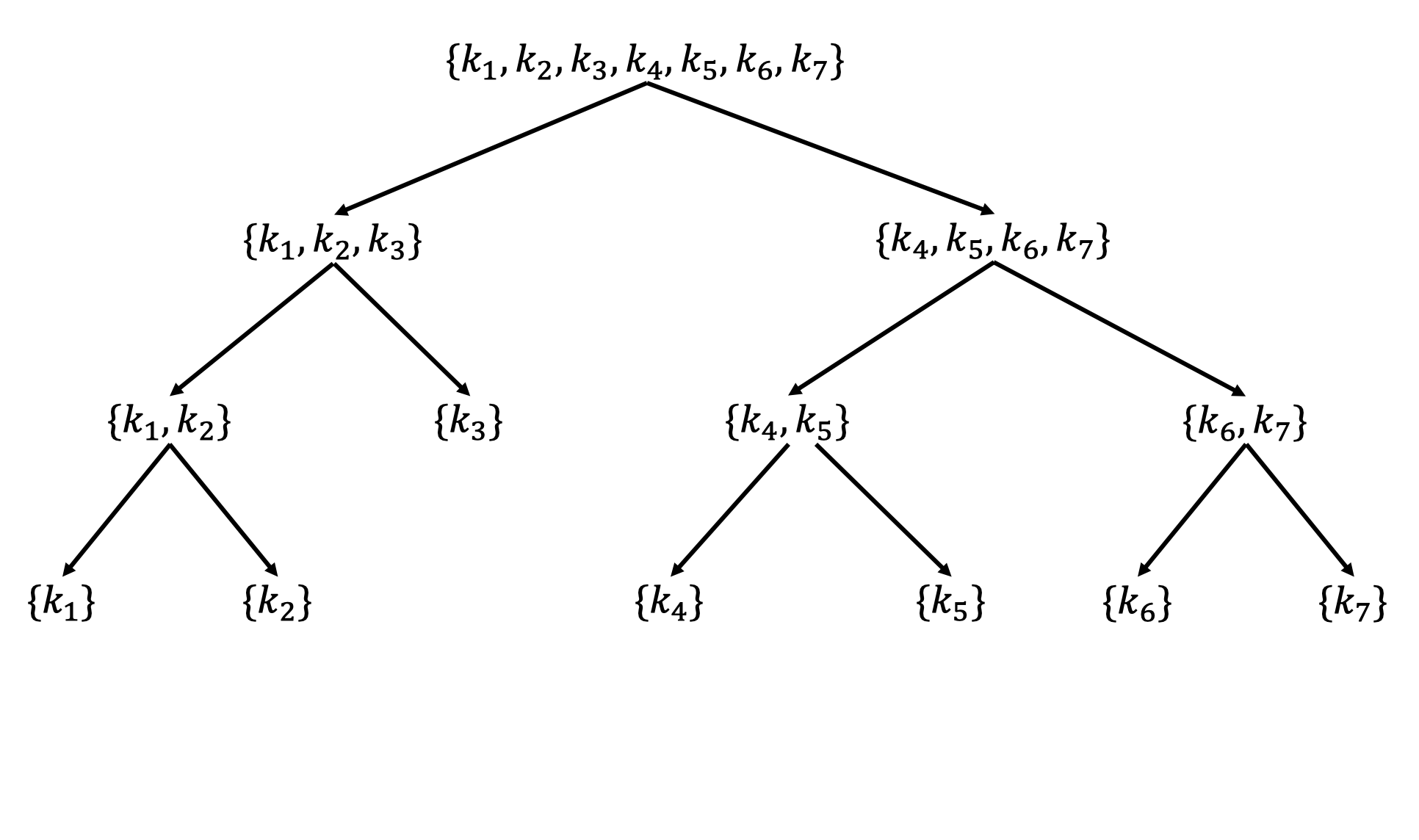}
\caption{Tree representation of laminar family of clusterization}\label{Fig:Rectilinear_Graph_Cluster_1_LaminarFamily}
\end{subfigure}
\caption{Example of terminals clusterization}
\label{Fig:Rectilinear_Example}
\end{figure}

There are many algorithms to cluster elements in the plane \cite{jain1999data}. We decided to use a $k$-means algorithm \cite{macqueen1967some} which is widely used in practice, because it is easy to implement, it runs very quickly, and it performs well in these types of clustering problems. Since the $k$-means output depends on the initial centroids, the quality of our constructed laminar family also depends on the initial centroids of each clusterization. This is why, we run the clustering-based algorithm several times to create laminar families, and then we select the laminar family with the best solution among all the candidates. 

\begin{algorithm}[H]
\caption{Algorithm $Part(s)$, which creates a bipartition of input set $s$, and all the created subsets}
\label{Alg:Clustering_Rect_Graph}
\begin{algorithmic}[1]
\Require Set $s$.
\State Define $\hat{S}=\{s\}$.
\If {$|s|=1$}
\Return $\hat{S}$
\ElsIf {$|s|=2$}
\State $\hat{S}\leftarrow \hat{S}\cup\left\{\{e_1\},\{e_2\}\right\}$, where $s=\{e_1,e_2\}$
\Else
\State $(s_1,s_2)=kMeans(s,2)$
\State $\hat{S}_1=Part(s_1)$,  $\hat{S}_2=Part(s_2)$
\State $\hat{S}\leftarrow \hat{S}\cup \hat{S}_1\cup \hat{S}_2$
\EndIf
\Return $\hat{S}$
\end{algorithmic}
\end{algorithm}

Algorithm \ref{Alg:Clustering_Rect_Graph}, named $Part(s)$, determines the way a new laminar family is created based on the clustering process previously described. This algorithm takes as input a given set $s$. If the set has only one element, then a collection of sets $\hat{S}$, containing the singleton set $s$, is returned as shown in line 4. If set $s$ has exactly 2 elements, then the algorithm returns a collection of sets $\hat{S}$, containing set $s$ and the two singletons, as shown in line 6. If set $s$ has 3 or more elements, then $s$ is split into two subsets $s_1$ and $s_2$, which is the result of running the $k$-means algorithm for $k=2$, i.e., two clusters. Then, the algorithm recurses getting two collections of sets $\hat{S}_1$ and $\hat{S}_2$, which is the result of applying algorithm $Part(\cdot)$ to $s_1$ and $s_2$ respectively. The returned collection of sets $\hat{S}$, contains all sets in $\hat{S}_1$, all sets in $\hat{S}_2$, and the set $s$. Consequently, to get the desired laminar family, we have to run algorithm $Part(s)$, for $s=K$.

When the instance we want to solve is undirected, then we need to choose the root node among the terminal nodes. For all $i\in R$, let $L(i)$ and $R(i)$ be the number of terminal nodes to the left of $i$, and to the right of $i$, respectively. And, let $U(i)$ and $B(i)$ be the number of terminals that are above $i$, and below $i$ in the plane, respectively. Finally, let $\Delta_x(i)=|R(i)-L(i)|$ and $\Delta_y(i)=|U(i)-B(i)|$, then we choose $r$ as follows.
\begin{align*}
r=\argmin_{i\in R}\{\Delta_x(i)+\Delta_y(i)\}
\end{align*}
We are basically choosing $r$ to be the most centrical terminal node. We pick $r$ in this fashion, since we can have a better guess of the structure of an optimal directed Steiner tree. For instance, in the example shown in Figure \ref{Fig:Rectilinear_Graph}, it is very likely that the paths from $r$ of the terminals that are above and to the left of $r$, i.e., terminals $t_1$ and $t_2$, are not going to share many arcs with the paths from $r$ to terminals that are below and to the right of $r$, i.e., terminals $t_6$ and $t_7$.

For rectilinear graphs, we use algorithm  \ref{Alg:Clustering_Rect_Graph} to create the initial laminar family in the simulated annealing framework. As described in Algorithms \ref{Alg:Simulated_Annealing} and \ref{Alg:Simulated_Annealing_Tester}, the initial laminar family is created at random. In rectilinear graphs, we run Algorithm \ref{Alg:Clustering_Rect_Graph} several times (the number of clusterizations is a parameter of the algorithm), and then we select a laminar family from the set of laminar families with the lowest optimal cost. The rest of the algorithm is the same as algorithm \ref{Alg:Simulated_Annealing}, or algorithm \ref{Alg:Simulated_Annealing_Tester} if we use the solution improvement. Moreover, if the instance is undirected, we choose $r$ as the most centrical terminal, as previously described.

\subsection{Worst case analysis}

Let $OPT$ be the value of an optimal solution for the directed Steiner tree problem instance, let $OPT_{SA}$ be the value of the solution returned by the simulated annealing framework, and for $l\in\mathcal{L}_b$, let $OPT(l)$ be the value of an optimal solution to $\mathcal{Z}_l$. 

\begin{lemma}\label{Lemma:worst_case}
For any $l\in\mathcal{L}_b$ we have $OPT\leq |R|\times OPT(l)$
\end{lemma}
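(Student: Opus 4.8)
The plan is to extract from an optimal solution of $\mathcal{Z}_l$ a feasible directed Steiner tree whose cost is controlled by $OPT(l)$, and thereby bound $OPT$ from above. First I would fix an optimal solution $x=(f,\hat{y},\overline{y},w)$ of $\mathcal{Z}_l$, so that its objective value is exactly $OPT(l)$. As recalled in Section \ref{Sec:Sol_Improvement}, the way this solution is constructed guarantees that for every commodity $k\in K$ there is a directed path $P_k$ from $r$ to $t_k$ using arcs of the support $A(l)$ of $x$; moreover $A(l)=\bigcup_{k\in K}P_k$, since every arc carrying flow lies on some commodity's route.

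The key intermediate estimate is a per-path bound. Because all arc costs are nonnegative and every arc traversed by the simple path $P_k$ is an arc that is used by $x$ — and hence whose cost is already accounted for in the objective $OPT(l)$ — the cost of a single $r$-$t_k$ path cannot exceed the total, i.e. $c(P_k)\le OPT(l)$ for each $k\in K$. I would then assemble the $|R|$ individual routes. The union $\bigcup_{k\in K}P_k=A(l)$ is a subgraph of $D$ in which every terminal is reachable from $r$, so by the pruning argument already given in Section \ref{Sec:Sol_Improvement} it contains a feasible directed Steiner tree $T\subseteq A(l)$. Bounding the cost of the shared union by the sum of the costs of its constituent paths and using $|K|=|R|$ then yields
\[
OPT \le c(T) \le c\big(A(l)\big) \le \sum_{k\in K} c(P_k) \le |R|\times OPT(l),
\]
which is precisely the claim.

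The step I expect to be the crux is the per-path inequality $c(P_k)\le OPT(l)$: it must be justified from the cost structure of the formulation of \cite{siebert2019linear} that each arc lying on a single commodity's route contributes (at least once) to the objective, so that no individual $r$-$t_k$ path can cost more than the whole solution. The remaining ingredients are elementary — a union of arc sets costs no more than the sum of the costs of the sets composing it, and $\bigcup_{k\in K}P_k$ already connects $r$ to all of $R$. Finally I would remark that the factor $|R|$ enters exactly because we replace the cost of the shared union $A(l)$ by the sum of the $|R|$ separate path costs; this is the only place slack is introduced, which is why the guarantee degrades by precisely the number of commodities.
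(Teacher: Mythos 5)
Your proof is internally sound, and it does establish the inequality exactly as printed — but it is a genuinely different argument from the paper's, and the comparison exposes an important mismatch. The paper never examines an optimal solution of $\mathcal{Z}_l$ at all. Instead it exhibits a cheap \emph{feasible} solution of $\mathcal{Z}_l$: set $f_a^s=0$ for every $s\in S(l)$ with $|s|\geq 2$, and route each singleton commodity $k$ along a shortest $r$-$t_k$ path. The cost of that solution is $\sum_{k\in K}c(sp(r,t_k))$, which by the classical guarantee for the shortest-path heuristic \cite{takahashi1990approximate} is at most $|R|$ times the optimum of the Steiner tree instance; the paper therefore concludes $OPT(l)\leq |R|\times OPT$. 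Note that this is the \emph{reverse} of the printed inequality, and it is the direction the subsequent proposition actually uses (its proof asserts that the solution found at every iteration ``cannot be more than $|R|$ times the value of an optimal solution to the problem,'' i.e., $OPT_{SA}\leq |R|\times OPT$). In other words, the lemma's statement appears to have its two sides transposed, and you proved the transposed version while the paper proves the intended one.

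The deeper point is that the direction you proved is the trivial one, and your method cannot be turned around. The very fact you flag as the crux — that every arc in the support $A(l)$ has its cost counted at least once in the objective — when applied to all of $A(l)$ at once rather than to a single path, gives $c(A(l))\leq OPT(l)$ directly; combined with your pruning step this yields $OPT\leq OPT(l)$, so the factor $|R|$ in the printed statement buys nothing. By contrast, bounding $OPT(l)$ from above by a multiple of $OPT$ (the useful, non-trivial direction) cannot be extracted from an optimal solution of $\mathcal{Z}_l$: it requires constructing a feasible solution of $\mathcal{Z}_l$ whose cost is comparable to $OPT$, which is precisely what the paper's shortest-path construction accomplishes. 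To recover the content that the worst-case analysis needs, you should redo the argument in that direction.
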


\begin{proof}
Let $l$ be an arbitrary laminar family of $\mathcal{L}_b$. Note that we can always construct the following feasible solution. For all $s\in S(l)$ with $|s|\geq 2$, we fix $f_a^s=0$ for all $a\in A$, and for all $k\in K$, which correspond to $s\in S(l)$ with $|s|=1$, we take a shortest path from $r$ to $t_k$. It is known that such solution is at most $|R|$ times the value of an optimal solution of the problem \cite{takahashi1990approximate}. Consequently, any optimal solution to $\mathcal{Z}_l$ is at most $|R|$ times the value of an optimal solution to the problem.
\end{proof}

\begin{proposition}
For every instance of the directed Steiner tree problem, we have $OPT\leq |R|\times OPT_{SA}$
\end{proposition}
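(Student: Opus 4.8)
The plan is to derive the Proposition as an immediate corollary of Lemma~\ref{Lemma:worst_case}. The key observation is that the simulated annealing framework, in either Algorithm~\ref{Alg:Simulated_Annealing} or Algorithm~\ref{Alg:Simulated_Annealing_Tester}, never returns an arbitrary feasible tree: the solution $x_{best}$ it outputs is always obtained from a call to $solve\_DP(\cdot)$ on some admissible laminar family. Throughout the execution, the variable $x_{best}$ is only ever assigned a value that was previously set to $x_{new}=solve\_DP(l_{new})$, and since $solve\_DP(l)$ computes an optimal solution to $\mathcal{Z}_l$ via Algorithm~\ref{Alg:DP_for_Z_l}, there is some laminar family $l_{best}\in\mathcal{L}_b$ for which $OPT_{SA}=c(x_{best})=OPT(l_{best})$.

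First I would make this identification precise. In Algorithm~\ref{Alg:Simulated_Annealing}, every assignment to $x_{best}$ (with its matching cost $c_{best}$) is copied from $x_{new}=solve\_DP(l_{new})$, so $x_{best}$ is an optimal solution of $\mathcal{Z}_{l_{best}}$ for the laminar family $l_{best}$ recorded alongside it. In Algorithm~\ref{Alg:Simulated_Annealing_Tester} the same holds after the solution-improvement block (lines 9--16): that block overwrites $l_{new}$ with a laminar family $l_{improved}\in\mathcal{L}_b$, which is made full-binary by the sampling step when necessary, and then recomputes $x_{new}=solve\_DP(l_{new})$. In both algorithms, therefore, the returned value satisfies $OPT_{SA}=OPT(l_{best})$ for some $l_{best}\in\mathcal{L}_b$.

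Then I would simply invoke Lemma~\ref{Lemma:worst_case} with $l=l_{best}$. Since the Lemma holds for \emph{every} admissible laminar family, in particular
\begin{align*}
OPT\leq |R|\times OPT(l_{best})=|R|\times OPT_{SA},
\end{align*}
which is exactly the claim.

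There is no substantive obstacle here: the content of the worst-case bound lives entirely in Lemma~\ref{Lemma:worst_case}, whose proof rests on the shortest-path-union guarantee of \cite{takahashi1990approximate}. The only care required is the bookkeeping in the first two paragraphs, namely verifying that the algorithm's output is genuinely $OPT(l)$ for some admissible $l$, rather than a post-processed tree lying outside the family $\{OPT(l):l\in\mathcal{L}_b\}$. Once that is established, the Proposition follows with no additional estimate.
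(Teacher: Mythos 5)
Your proof is correct and takes essentially the same route as the paper's: both reduce the claim immediately to Lemma~\ref{Lemma:worst_case} by observing that the value returned by the simulated annealing framework equals $OPT(l)$ for some admissible laminar family $l\in\mathcal{L}_b$, since every candidate stored in $x_{best}$ comes from a call to $solve\_DP(\cdot)$, which solves $\mathcal{Z}_l$ to optimality. The only difference is that you spell out the bookkeeping explicitly---including that the solution-improvement block of Algorithm~\ref{Alg:Simulated_Annealing_Tester} still hands an admissible laminar family to $solve\_DP(\cdot)$---where the paper compresses this into a single sentence.
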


\begin{proof}
Since at every iteration we solve to optimality $\mathcal{Z}_l$ for some $l\in\mathcal{L}_b$, then using Lemma \ref{Lemma:worst_case}, we have that at every iteration, the best solution found cannot be more than $|R|$ times the value of an optimal solution to the problem, and the statement holds.
\end{proof}

Let $T(l)$ be the tree representation of laminar family $l\in\mathcal{L}_b$, and let $l^*\in\mathcal{L}_b$ be the laminar family of an optimal solution to the problem . We define $d_{SPR}(T(l_1),T(l_2))$ as the minimum number of SPR moves to transition from tree $T(l_1)$ to tree $T(l_2)$. It has been proven that computing $d_{SPR}(T(l_1),T(l_2))$ for any pair of $l_1,l_2$ is NP-Hard \cite{bordewich2005computational}. Nevertheless, it was proven in \cite{song2003combinatorics}, that for any two $l_1,l_2\in\mathcal{L}_b$, we have that $d_{SPR}(T(l_1),T(l_2))\leq |R|-2$. Consequently, if we use at least $|R|-2$ iterations in the simulated annealing framework, no matter the initial laminar family chosen, the probability of solving $\mathcal{Z}_{l^*}$ in a given iteration, is strictly positive.

\section{Computation experiments}\label{Sec:Results}

In this section we present our computational results. We compare our proposed simulated annealing framework with all the algorithms studied in \cite{watel2016practical}.

In \cite{watel2016practical}, 6 algorithms are compared. The first algorithm, denoted by $\text{ShP}_1$, takes a shortest path from $r$ to each terminal, and then returns the union of such shortest paths as a solution. The second algorithm, denoted by $\text{ShP}_2$, takes a shortest path from $r$ to its closest terminal, sets the cost of all used arcs to 0, and then proceeds in the same fashion with the rest of the terminals, until all terminals are reached. The third algorithm, denoted by DuAs, corresponds to using the dual ascent algorithm presented in \cite{wong1984dual}. If the solution is fractional, then it takes a minimum cost spanning tree within the support of the fractional solution. If some of the leave nodes are non-terminal nodes, then the tree is pruned until all leaves are terminal nodes. The fourth algorithm, denoted by Roos, corresponds to the algorithm presented in \cite{charikar1999approximation}, which has an approximation ratio $\mathcal{O}\left(|R|^{\frac{1}{t}}\right)$ using $t=2$, and is implemented using Roos modified algorithm which is described in \cite{ming2006fasterdsp}. The fifth algorithm, denoted by FLAC, is one of the algorithms introduced by the authors in \cite{watel2016practical}. This algorithm takes each arc as a pipe with capacity, in liters, equal to its cost. Then the algorithm tries to send water to the terminals, at a rate of 1 liter of water per second. Initially, only the arcs incoming to terminals are going to be considered. When an arc is up to its capacity, it is said to be saturated. Once an arc is saturated, then we start looking at the arcs incoming to the tail node of the saturated arcs too. This process is done until we reach the root node. Then, within the support of the saturated arcs, we have a directed Steiner tree. The sixth algorithm, denoted by $\text{FLAC}^{\rhd}$, is the FLAC algorithm applied to the shortest path instance of the problem, i.e., to a complete directed graph where the cost from $u$ to $v$ is given by the shortest path, in the original graph, between $u$ and $v$. Finally, we define the Best Benchmark (BB) to be the algorithm that, for each instance, takes the best result among the previous 6 algorithms. The solutions provided by our algorithms will be compared with the solutions provided by BB.

On the other hand, we present 3 algorithms, SA, SA-Test and SA-Rect algorithm. The SA algorithm corresponds to Algorithm \ref{Alg:Simulated_Annealing}, SA-Test corresponds to Algorithm \ref{Alg:Simulated_Annealing_Tester}, and SA-Rect corresponds to Algorithm \ref{Alg:Simulated_Annealing_Tester} but using Algorithm \ref{Alg:Clustering_Rect_Graph} to construct the initial laminar family. Since all of the proposed algorithms have a random component, we run 10 replications of each one, and the result of each algorithm corresponds to the solution with the lowest cost among the 10 replications. Furthermore, we run each algorithm with 1,000 and with 5,000 iterations, since the number of iterations is a parameter of the proposed algorithms. In the SA-Rect case, we run 50 replications of Algorithm \ref{Alg:Clustering_Rect_Graph}, and we select the solution with lowest cost as the initial laminar family of the simulated annealing algorithm.

We run experiments using the same instances studied in \cite{watel2016practical}, which correspond to directed graphs constructed based on undirected instances from the SteinLib library \cite{koch2001steinlib}. We only consider instances with at most 160 terminal nodes, and less than 3,500 nodes, since the proposed approach requires the computation of the shortest path between every pair of nodes. In total, we studied just over 800 instances, whose details can be found in Table \ref{app:Table:Studied_Instances} of Appendix \ref{app:instances}. 

The proposed algorithms were implemented in Java 8. All the experiments were run in an AWS {\it c5d.2xlarge} machine, with am {\it Intel Xeon Platinum 8000-series} processor (3.0 GHz) of 8 cores and 16 GB of memory.

\subsection{Non-rectilinear graphs}\label{Sec:Results_Non_Rectilinear_Graphs}

In this section, we compare the solution quality of the regular simulated annealing algorithm (see Algorithm \ref{Alg:Simulated_Annealing}), the simulated annealing with solution improvement (see Algorithm \ref{Alg:Simulated_Annealing_Tester}), and the best benchmark algorithm. We only consider instances which are not rectilinear graphs, since there is a specific algorithm for such cases, which are analyzed in section \ref{Sec:Results_Rectilinear_Graphs}. We use the performance profile approach to compare the solution quality delivered by the studied algorithms \cite{dolan2002benchmarking}. Figure \ref{Fig:SA_vs_SA_Test} shows the cumulative distribution of instances versus the error of the obtained solution, this graph should be read as follows. If, for instance, we have a point $(1.5,0.85)$ it means that 85\% of the instances solved have a gap smaller or equal to 1.5\%.

\begin{figure}[H]
\begin{center}
\includegraphics*[width=1.0\textwidth]{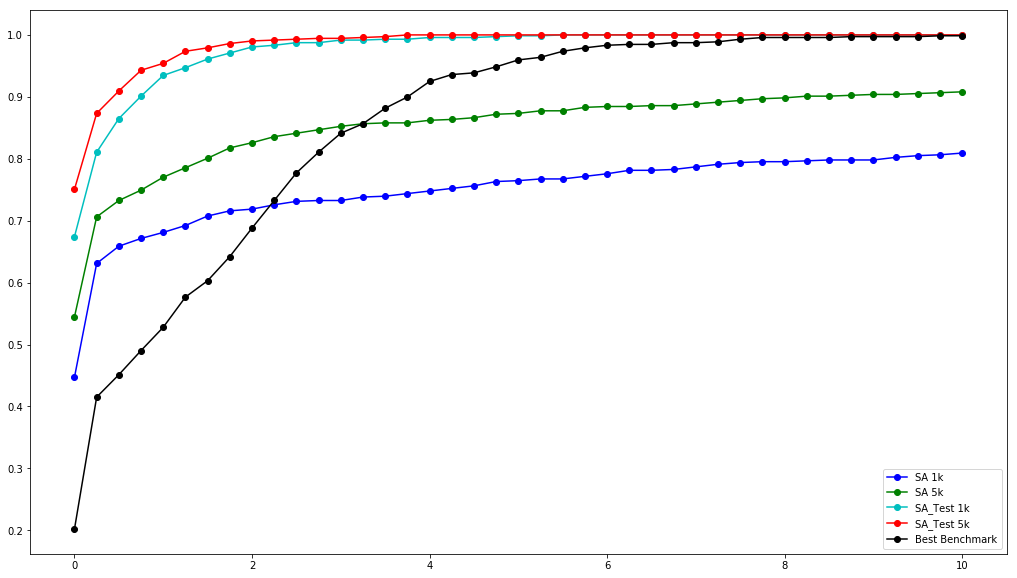}
\caption{Performance profile for SA and SA-Test, for 1,000 and 5,000 iterations}
\label{Fig:SA_vs_SA_Test}
\end{center}
\end{figure}

It is clear from Figure \ref{Fig:SA_vs_SA_Test} that SA-Test outperforms SA, and BB. First of all, we expected that, for the same algorithm, using more iterations will lead to better results. This happens with both, the SA and the SA-Test algorithm. The difference is more pronounced in the SA case; while in the SA-Test, the difference still exists, specially for smaller gaps, but after 2\%, the two curves are almost identical. Also, note that SA-Test algorithm with just 1,000 iterations, outperforms by far the BB algorithm, and the SA algorithm with 5,000 iterations. Another interesting observation is the difference in the proportion of instances solved to optimality. The BB algorithm solves around 20\% of the instances to optimality, being the worst of all the algorithms studied in this topic. Furthermore, while the SA algorithm cannot solve more than 55\% of the instances to optimality, we have that SA-Test can solve over 67\% of the instances to optimality with 1,000 iterations, and 75\% of the instances with 5,000.

\begin{table}[H]
\begin{center}
\begin{tabular}{c |c c c}
\hline
	Algorithm & Worse than SA 5k & Equal to SA 5k & Better than SA 5k \\  \hline
	SA-Test 1k & 4.7\% & 55.4\% & 39.8\% \\
	SA-Test 5k & 1.8\% & 55.8\% & 42.3\% \\
	 \hline
\end{tabular}
\end{center}
\caption{Proportion of instances where SA-Test is strictly worse, equal, or strictly better than SA with 5,000 iterations}
\label{Table:SA_vs_SA_Test}
\end{table}
Table \ref{Table:SA_vs_SA_Test} shows the proportion of instances where SA-Test algorithm does strictly worse, equal, or strictly better than SA with 5,000 iterations. Even when we use 1,000 iterations, SA-Test only delivers worse results than SA in fewer than 5\% of the cases studied; this number reduces to 1.8\% when we use 5,000 iterations in SA-Test.
\begin{table}[H]
\begin{center}
\begin{tabular}{c |c c c}
\hline
	Algorithm & Worse than BB & Equal to BB & Better than BB \\  \hline
	SA-Test 1k & 3.6\% & 21.0\% & 75.3\% \\
	SA-Test 5k & 2.2\% & 20.6\% & 77.2\% \\
	 \hline
\end{tabular}
\end{center}
\caption{Proportion of instances where SA-Test is strictly worse, equal, or strictly better than best benchmark}
\label{Table:BB_vs_SA_Test}
\end{table}
Table \ref{Table:BB_vs_SA_Test} show the proportion of instances where SA-Test algorithm does strictly worse, equal, or strictly better than BB. When we use 1,000 iterations, SA-Test only deliver worse results than BB in 3.6\% of the cases studied, this number reduces to 2.2\% when we use 5,000 iterations in SA-Test.

We conclude that SA-Test outperforms, in solution quality, the BB and SA algorithms. Although SA-Test with 5,000 iterations gives better performance than SA-Test with 1,000 iterations, the results are not considerably better.

\subsection{Rectilinear graphs}\label{Sec:Results_Rectilinear_Graphs}

In this section we consider rectilinear graphs. We compare the solution quality of the regular simulated annealing algorithm (see Algorithm \ref{Alg:Simulated_Annealing}), the simulated annealing for rectilinear graphs (see Algorithms \ref{Alg:Simulated_Annealing_Tester} and \ref{Alg:Clustering_Rect_Graph}), and the best benchmark algorithm. We compare the three algorithms in the same way we compared SA, SA-Test, and BB in section \ref{Sec:Results_Non_Rectilinear_Graphs}.

\begin{figure}[H]
\begin{center}
\includegraphics*[width=1.0\textwidth]{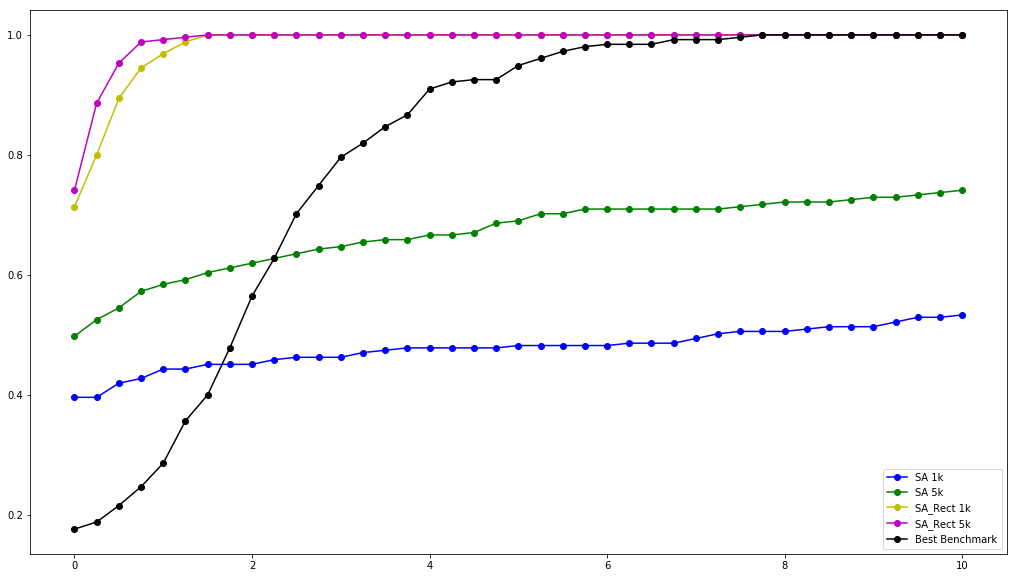}
\caption{Performance profile for SA and SA-Rect, for 1,000 and 5,000 iterations}
\label{Fig:SA_vs_SA_Rect}
\end{center}
\end{figure}

It is clear from Figure \ref{Fig:SA_vs_SA_Rect} that SA-Rect outperforms SA, and BB. The results are even more pronounced than the non-rectilinear graphs, since SA-Rect always delivers solutions with at most 1.75\% gap. With respect to the SA and SA-Rect algorithms, again we see that by solving the problem with more iterations, the quality of the solutions improve. And again, the difference is more pronounced in the SA case. Indeed, for the SA-Rect the performance profiles are very similar, having a slightly better performance with 5,000 iterations.

In this case, we also have that SA-Rect algorithm with just 1,000 iterations, outperforms by far the BB algorithm, and the SA algorithm with 5,000 iterations. BB solves less than 18\% of the instances to optimality, while SA solves almost 40\% of instances to optimality with 1,000 iterations, and almost 50\% with 5,000 iterations. In contrast, SA-Rect solves over 71\% of instances to optimality with 1,000 iterations, and almost 75\% with 5,000 iterations.

\begin{table}[H]
\begin{center}
\begin{tabular}{c |c c c}
\hline
	Algorithm & Worse than SA 5k & Equal to SA 5k & Better than SA 5k \\  \hline
	SA-Rect 1k & 1.2\% & 49.8\% & 49.0\% \\
	SA-Rect 5k & 1.2\% & 50.2\% & 48.6\% \\
	 \hline
\end{tabular}
\end{center}
\caption{Proportion of instances where SA-Rect is strictly worse, equal, or strictly better than SA with 5,000 iterations}
\label{Table:SA_vs_SA_Rect}
\end{table}
Table \ref{Table:SA_vs_SA_Rect} shows the proportion of instances where SA-Rect algorithm does strictly worse, equal, or strictly better than SA with 5,000 iterations. We can see that when the number of iterations is 1,000, SA-Rect only delivers worse results than SA in 1.2\% of the cases studied, the same amount when we use 5,000 iterations in SA-Rect.
\begin{table}[H]
\begin{center}
\begin{tabular}{c |c c c}
\hline
	Algorithm & Worse than BB & Equal to BB & Better than BB \\  \hline
	SA-Rect 1k & 0.4\% & 17.3\% & 82.3\% \\
	SA-Rect 5k & 0.0\% & 17.6\% & 82.4\% \\
	 \hline
\end{tabular}
\end{center}
\caption{Proportion of instances where SA-Rect is strictly worse, equal, or strictly better than best benchmark}
\label{Table:BB_vs_SA_Rect}
\end{table}

Table \ref{Table:BB_vs_SA_Rect} shows the proportion of instances where SA-Rect algorithm does strictly worse, equal, or strictly better than BB. We can see that when we use 1,000 iterations, SA-Rect only delivers worse results than BB in 0.4\% of the cases studied, which in this case corresponds to only one instance. When we use 5,000 iterations, SA-Rect performance is at least as well as BB in all the studied instances.

We conclude, that SA-Rect outperforms, in solution quality, the BB and SA algorithms. Although the conclusions in the rectilinear case are similar to the non-rectilinear case, the solution quality obtained by the improved version of SA in rectilinear graphs is better than the non-rectilinear case. All the instances in the rectilinear case present a gap smaller or equal to 1.75\%, while in the non-rectilinear case it is 3.5\% when we run SA-Test with 5,000 iterations, and 5\% with 1,000 iterations. Moreover, the proportion of instances where SA-Rect performs worse than, either SA or BB, is lower than SA-Test.

\subsection{Execution times}

At each iteration of simulated annealing, we compute the new solution based on the solution of the previous iteration. Therefore, simulated annealing, by nature, is a sequential algorithm. There are some researchers that study parallel versions of the algorithm \cite{greening1990parallel,ram1996parallel,czech2002parallel}, but we just focused on the original version. Consequently, the running time of the algorithm will depend on the number of iterations.

Figure \ref{Fig:SA_Time_Profile} shows the histogram of the average execution time per iteration for simulated annealing with, and without the solution improvement routine.

\begin{figure}[H]
\begin{center}
\includegraphics*[width=1.0\textwidth]{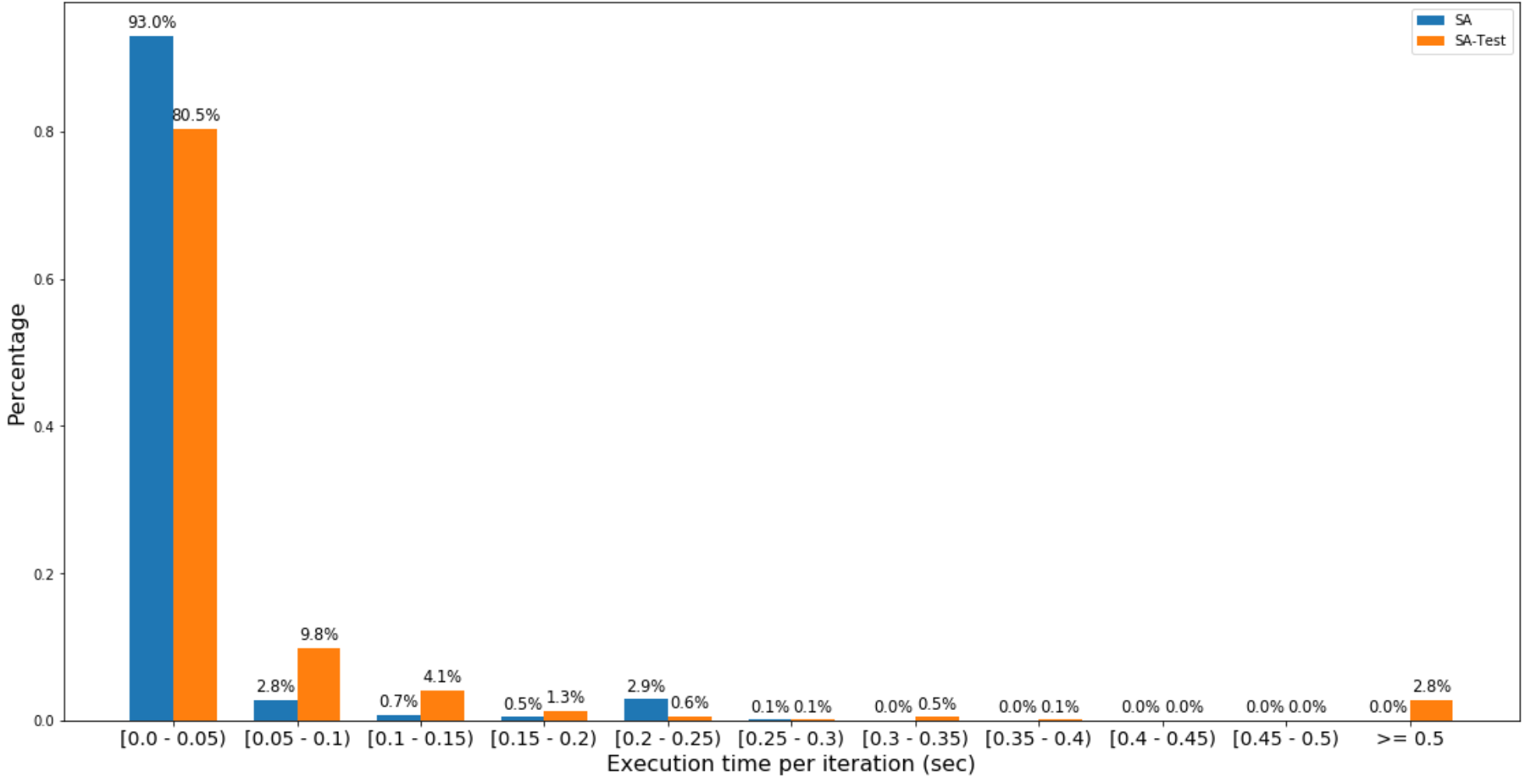}
\end{center}
\caption{Execution time performance profile for SA with no solution improvement}
\label{Fig:SA_Time_Profile}
\end{figure}

As expected, the execution times when not using the solution improvement routine are lower, since there are fewer steps to complete at each iteration, but the distribution of the execution time when using solution improvement is not much worse. In both cases, the majority of the instances have an average execution time below 50 milliseconds. 

The execution time per iteration, in both cases of the simulated annealing framework, are in the same order of magnitude as the execution times of all the studied algorithms in \cite{watel2016practical}. The main difference is that the total execution time of the simulated annealing algorithm is larger given the number of iterations to perform. In any case, the vast majority of the instances solve within seconds, or a few minutes, which makes this approach appealing to use given the better results in solution quality. Moreover, we can always use the solution given by the best benchmark algorithm, which takes a few milliseconds to solve, as the initial laminar family and start the simulated annealing framework from there.

\section{Conclusions and future work}\label{Sec:Conclusions}

We developed a simulated annealing framework to solve the directed Steiner tree problem based on the approach proposed in \cite{siebert2019linear}. We gave an efficient algorithm to solve the subproblem of each tree structure and then we used simulated annealing to find better solutions. We compared the proposed framework with the algorithms studied in \cite{watel2016practical}, and we concluded that our approach outperforms these algorithms in solution quality.

Future research might be directed in applying the insights obtained in this paper to other solution techniques for the problem. For instance, we can use our algorithm to solve each laminar family subproblem to find better upper bounds in a Branch and Bound setting. In particular, we wonder if we can obtain a significant reduction in execution times when we use the Branch and Ascent approach proposed in \cite{de2001dual}. At any node of the search tree, we can get candidate laminar families from the support of the fractional solution, and then solve the problem for that set of tree structures to compute better primal bounds.

\section*{Acknowledgement}
This research was partially supported by Office on Naval Research grants N00014-15-1-2078 and N00014-18-1-2075 to the Georgia Institute of Technology, and by the CONICYT (Chilean National Commission for Scientific and Technological Research) through the Doctoral Fellowship program ``Becas Chile'', Grant No. 72160393
\newpage


	\setlength\bibitemsep{\baselineskip}  
	\printbibliography[title={References}]

\newpage
\begin{appendices}

\addtocontents{toc}{\protect\renewcommand{\protect\cftchappresnum}{\appendixname\space}}
\addtocontents{toc}{\protect\renewcommand{\protect\cftchapnumwidth}{6em}}


\section{Simulated annealing instances}
\label{app:instances}

Table \ref{app:Table:Studied_Instances} contains the information about all the instances studied in Section \ref{Sec:Results}. Table \ref{app:Table:Studied_Instances} contains the name of the instances, as well as the number of nodes, arcs and number of terminal nodes (excluding the root node). Column `OPT' contains the value of an optimal solution to the instance. For instances whose optimal value is not known, we use the symbol `-'. Column `BB' corresponds to the value of the best benchmark solution. Columns `SA 1k' , `SA-Test 1k' , and `SA-Rect 1k' correspond to the values of the simulated annealing (SA), SA with solution improvement, and SA for rectilinear graphs using 1,000 iterations. Columns `SA 5k' , `SA-Test 5k' , and `SA-Rect 5k' correspond to the values of the SA, SA with solution improvement, and SA for rectilinear graphs using 5,000 iterations. For not rectilinear instances, we use the symbol `-' in columns `SA-Rect 1k' and `SA-Rect 5k'.

\tiny


\normalsize

\end{appendices}

\end{document}